\documentclass[11pt]{article}

\usepackage[a4paper,margin=1in]{geometry}
\usepackage[T1]{fontenc}
\usepackage[utf8]{inputenc}
\usepackage{lmodern}
\usepackage{amsmath,amssymb,amsthm,mathtools}
\usepackage{enumitem}
\usepackage{hyperref}
\usepackage{booktabs}
\usepackage{array}
\usepackage{graphicx}
\usepackage{float}
\usepackage{microtype}

\hypersetup{
  colorlinks=true,
  linkcolor=blue,
  citecolor=blue,
  urlcolor=blue
}

\newtheorem{theorem}{Theorem}[section]
\newtheorem{proposition}[theorem]{Proposition}
\newtheorem{lemma}[theorem]{Lemma}
\newtheorem{corollary}[theorem]{Corollary}
\newtheorem{problem}[theorem]{Problem}
\newtheorem{observation}[theorem]{Observation}
\theoremstyle{definition}
\newtheorem{definition}[theorem]{Definition}
\theoremstyle{remark}
\newtheorem{remark}[theorem]{Remark}

\newcommand{\Cl}{\mathrm{Cl}}
\newcommand{\supp}{\operatorname{supp}}

\title{Degree theory of the partition graph: exact maxima, profiles, and fibres}
\author{Fedor B. Lyudogovskiy}
\date{}

\begin{document}

\maketitle

\begin{abstract}
We develop a systematic degree theory for the partition graph $G_n$, whose vertices are the partitions of $n$ and whose edges correspond to elementary unit transfers between parts. The starting point is the exact local degree formula, expressed in terms of support size, gap pattern, and multiplicity pattern, which we rewrite in a support-plus-bonus form.

Writing
\[
n=T_s+q,
\qquad
T_s=\frac{s(s+1)}2,
\qquad
0\le q\le s,
\]
we prove that every degree-maximizing partition of $n$ lies in the support-maximal stratum, and we obtain the exact formula
\[
\Delta_n
=
s(s-1)+\left\lfloor\sqrt{4q+1}\right\rfloor-1
\]
for the maximal degree in $G_n$. We then move from value theory to profile theory. For a support-maximal partition, let $A(\lambda)$ and $B(\lambda)$ denote the numbers of active gap bonuses and multiplicity bonuses, respectively. We show that the set of realized maximizing profiles is
\[
\Pi_n
=
\bigl\{(a,b)\in\mathbb Z_{\ge0}^2 : a+b=\rho(q),\ T_a+T_b\le q\bigr\},
\]
where $\rho(q)=\left\lfloor\sqrt{4q+1}\right\rfloor-1$.

This reduces the remaining extremal degree theory to the fibre-level geometry of the maximizer set. For each realized profile $(a,b)$, we study the corresponding fibre of maximizers. We prove that every such fibre is nonempty, construct canonical representatives, obtain lower bounds for mixed fibres, and show that conjugation induces a bijection between the fibres for $(a,b)$ and $(b,a)$.

In this way, the degree theory of $G_n$ splits naturally into three levels: exact value theory, exact profile theory, and fibre-level geometry. The first few near-triangular windows are classified completely.
\end{abstract}

\medskip
\noindent\textbf{Keywords:} partition graph; integer partitions; vertex degree; extremal vertices; maximizing profiles; maximizer fibres.

\smallskip
\noindent\textbf{MSC2020:} 05A17, 05C07, 05C35, 05C75.

\section{Introduction}

For a positive integer $n$, let $G_n$ denote the partition graph of $n$: its vertices are the integer partitions $\lambda\vdash n$, and two vertices are adjacent if one partition is obtained from the other by an elementary transfer of one unit from one part to another, followed by reordering. Standard background on integer partitions, Ferrers diagrams, and conjugation may be found in Andrews~\cite{Andrews} and Stanley~\cite{StanleyEC2}. The elementary transfer operation used in the definition of $G_n$ is also closely related to the transfer operations that occur in majorization theory; see, for example, Marshall, Olkin, and Arnold~\cite{MarshallOlkinArnold}. In earlier papers of this cycle, the graph $G_n$ was studied from several complementary viewpoints, including local morphology~\cite{LyuLocal}, axial and boundary structure~\cite{LyuAxial,LyuBoundary}, directional geometry~\cite{LyuDirectional}, growth across $n$~\cite{LyuGrowing}, and the topology of the clique complex $K_n=\Cl(G_n)$~\cite{LyuClique}. The purpose of the present paper is to isolate and develop one specific invariant: the vertex degree in $G_n$.

The degree has an exact expression in terms of a partition's support size, gap pattern, and multiplicity pattern. We show that this local formula has strong global consequences. It leads to an extremal theory of degrees, an exact description of the maximal degree, and a natural structural decomposition of the set of degree-maximizing partitions.

The basic mechanism is as follows. For a partition with support size $r$, the degree decomposes as
\[
\deg(\lambda)=r(r-1)+A(\lambda)+B(\lambda),
\]
with a support-dependent baseline term $r(r-1)$ and a binary bonus term determined by which gaps exceed $1$ and which multiplicities exceed $1$. This separates the degree theory into two layers: a coarse layer governed by support size, and a finer layer governed by binary local data. Once this decomposition is combined with the staircase decomposition
\[
n=T_s+q,
\qquad
T_s=\frac{s(s+1)}2,
\qquad
0\le q\le s,
\]
the global extremal problem becomes tractable.

Our first main result is an exact formula for the maximal degree. We prove that every degree-maximizing partition of $n$ lies in the support-maximal stratum, and that
\[
\Delta_n
=
s(s-1)+\left\lfloor\sqrt{4q+1}\right\rfloor-1
\qquad
(n=T_s+q,\ 0\le q\le s).
\]
Thus the maximal degree is completely explicit. In particular, the triangular numbers $T_s$ form a rigid extremal backbone, and the increment above this baseline is governed by a step function of the excess parameter $q$.

Our second main result is profile-theoretic. For a support-maximal partition $\lambda$, let $A(\lambda)$ denote the number of active gap bonuses and $B(\lambda)$ the number of active multiplicity bonuses. We show that every maximizer satisfies
\[
A(\lambda)+B(\lambda)=\left\lfloor\sqrt{4q+1}\right\rfloor-1,
\]
together with an explicit triangular budget constraint on the pair $(A(\lambda),B(\lambda))$. Moreover, this constraint is exact: every admissible maximizing profile is realized by an actual maximizer. Hence the profile-level structure of the maximizer set is determined completely. Thus the exact global theory stops at the profile level; the remaining complexity is fibre-level.

Once $\Delta_n$ and the set of realized maximizing profiles are known, the remaining degree theory reduces to the fibre level. For each realized profile $(a,b)$, one studies the fibre
\[
M_n^{(a,b)}
=
\{\lambda\vdash n : \deg(\lambda)=\Delta_n,\ (A(\lambda),B(\lambda))=(a,b)\}.
\]
At this level, the main questions concern the number of maximizers, the sizes of the individual fibres, conjugation symmetry, the self-conjugate part of the maximizer set, and the recurrence of geometric pattern types inside the support-maximal layer. We prove several general structural results in this direction, including the existence of canonical representatives for every realized profile, explicit lower bounds for mixed fibres, and a precise description of the action of conjugation on profile coordinates. We also determine exactly the first near-triangular fibre windows, showing that the unresolved part of the fibre geometry begins only after these initial excesses.

This leads naturally to a three-level organization of the paper: value theory, profile theory, and fibre-level structure.

The main contributions of the paper are as follows.

\begin{enumerate}[leftmargin=2em]
\item
We rewrite the exact local degree formula in a form adapted to global optimization, separating a support baseline from a binary bonus term.

\item
We prove that every global degree maximizer lies in the support-maximal stratum.

\item
We obtain an exact closed formula for the maximal degree:
\[
\Delta_n
=
s(s-1)+\left\lfloor\sqrt{4q+1}\right\rfloor-1
\qquad
(n=T_s+q).
\]

\item
We determine the realized maximizing profile set exactly.

\item
We establish several general fibre-level structural results, including canonical representatives, conjugation symmetry, and explicit lower bounds for mixed fibres.

\item
We classify exactly the first near-triangular fibre windows and isolate the localization mechanism that governs the unresolved fixed-$q$ regime.
\end{enumerate}

This paper is a detailed special study within the broader partition-graph project. Its aim is to develop the degree not merely as a local invariant, but as an extremal theory and a source of structural and numerical questions.

The paper is organized as follows. Section~\ref{sec:preliminaries} recalls the degree formula and establishes the support-maximal reduction. Section~\ref{sec:extremal} develops the exact extremal theory and proves the formula for $\Delta_n$. Section~\ref{sec:small-excess} examines the first near-triangular layers as an initial rigid regime. Section~\ref{sec:profiles} determines all realized maximizing profiles. Section~\ref{sec:fibres} develops the first general fibre-level structure theory. Section~\ref{sec:computations} classifies the initial near-triangular fibre windows, proves a localization lemma for active coordinates, and formulates the resulting stability questions. The final section collects conclusions and open problems.

\section{Preliminaries and the degree formula}\label{sec:preliminaries}

Recall that $G_n$ is the graph whose vertices are the partitions of $n$, with adjacency given by elementary unit transfers between parts followed by reordering. We use standard partition and Ferrers-diagram terminology in the sense of~\cite{Andrews,StanleyEC2}. Conjugation of Ferrers diagrams preserves this adjacency, and therefore defines an involutive automorphism of $G_n$.

We begin by introducing the degree-theoretic language used throughout the paper. Our main technical input is the previously established exact degree formula in terms of the ordered local transfer data of a partition. Here we use that formula globally, as a structural tool for studying extremal degree behavior, maximizing profiles, and the organization of high-degree regions in the partition graph.

We write a partition $\lambda\vdash n$ in compressed form as
\[
\lambda=(L_1^{m_1},\dots,L_r^{m_r}),
\qquad
L_1>\cdots>L_r>0,
\]
where $r=\supp(\lambda)$ is the number of distinct part sizes. We define
\[
g_i=L_i-L_{i+1}\quad(1\le i<r),
\qquad
g_r=L_r,
\]
and
\[
\alpha_i=\mathbf 1_{\{g_i>1\}},
\qquad
\beta_i=\mathbf 1_{\{m_i>1\}}.
\]
Further, let
\[
A(\lambda):=\sum_{i=1}^r \alpha_i,
\qquad
B(\lambda):=\sum_{i=1}^r \beta_i.
\]
For $m\ge 0$, write
\[
T_m:=\frac{m(m+1)}2,
\]
so in particular $T_0=0$.

\begin{lemma}[Exact excess identity]\label{lem:excess-identity}
Let
\[
\lambda=(L_1^{m_1},\dots,L_r^{m_r})\vdash n
\]
be written in compressed form. Then
\[
n-T_r
=
\sum_{i=1}^r i\,(g_i-1)
+
\sum_{i=1}^r (m_i-1)L_i.
\]
Consequently,
\[
A(\lambda)+B(\lambda)\le n-T_r.
\]
\end{lemma}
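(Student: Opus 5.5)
We write $n=\sum_i m_iL_i$, split off one copy of each distinct part, and then telescope the distinct part sizes into gaps. The multiplicity decomposition gives
\[
n=\sum_{i=1}^r L_i+\sum_{i=1}^r (m_i-1)L_i .
\]
It then remains to show
\[
\sum_{i=1}^r L_i=T_r+\sum_{i=1}^r i\,(g_i-1).
\]

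For this we express each $L_j$ through the gaps. Since $g_i=L_i-L_{i+1}$ for $i<r$ and $g_r=L_r$, telescoping gives $L_j=\sum_{i=j}^r g_i$. Summing over $j$ and exchanging the order of summation, each $g_i$ is counted once for every $j\le i$, that is, $i$ times. Hence
\[
\sum_j L_j=\sum_i i\,g_i=\sum_i i+\sum_i i(g_i-1)=T_r+\sum_i i(g_i-1).
\]
This is the claimed identity. The only point needing care is the index convention, namely that $g_r=L_r$ is weighted by $r$; the telescoping handles this automatically.

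For the inequality we compare the two sums with $A(\lambda)$ and $B(\lambda)$ term by term. Every term on the right-hand side is nonnegative, because $g_i\ge1$, $m_i\ge1$ and $L_i\ge1$. If $\alpha_i=1$, then $g_i\ge2$, so $i(g_i-1)\ge 1\ge\alpha_i$. If $\alpha_i=0$, the term is still nonnegative, so $i(g_i-1)\ge\alpha_i$ holds in every case. Likewise, if $\beta_i=1$, then $m_i\ge2$ and $L_i\ge1$ give $(m_i-1)L_i\ge1$, so $(m_i-1)L_i\ge\beta_i$ in every case.

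Summing these bounds over $i$ yields $A(\lambda)+B(\lambda)\le n-T_r$. No step here is a real obstacle; the argument is elementary bookkeeping.
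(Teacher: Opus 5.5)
Your proposal is correct and follows the paper's proof essentially step for step. Both arguments use the split $n=\sum_i L_i+\sum_i (m_i-1)L_i$, telescope $L_j=\sum_{i\ge j} g_i$ and exchange the order of summation to get $\sum_i i\,g_i$, and then apply the termwise bounds $\alpha_i\le i(g_i-1)$ and $\beta_i\le (m_i-1)L_i$. You spell out the nonnegativity for inactive indices, which the paper leaves implicit.
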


\begin{proof}
Since
\[
\lambda=(L_1^{m_1},\dots,L_r^{m_r}),
\]
we have
\[
n=\sum_{i=1}^r m_iL_i
 =\sum_{i=1}^r L_i+\sum_{i=1}^r (m_i-1)L_i.
\]
By the definition of the gaps,
\[
L_i=\sum_{j=i}^r g_j.
\]
Therefore
\[
\sum_{i=1}^r L_i
=
\sum_{i=1}^r\sum_{j=i}^r g_j
=
\sum_{j=1}^r j\,g_j.
\]
Hence
\[
n=\sum_{j=1}^r j\,g_j+\sum_{i=1}^r (m_i-1)L_i.
\]
Subtracting
\[
T_r=\sum_{j=1}^r j
\]
gives
\[
n-T_r
=
\sum_{j=1}^r j(g_j-1)+\sum_{i=1}^r (m_i-1)L_i.
\]

If $g_i>1$, then $g_i-1\ge 1$, so $\alpha_i\le i(g_i-1)$. If $m_i>1$, then $(m_i-1)L_i\ge 1$, so $\beta_i\le (m_i-1)L_i$. Summing yields
\[
A(\lambda)\le \sum_{i=1}^r i(g_i-1),
\qquad
B(\lambda)\le \sum_{i=1}^r (m_i-1)L_i,
\]
and hence
\[
A(\lambda)+B(\lambda)\le n-T_r.
\]
\end{proof}

\begin{theorem}[Fixed-support degree formula and equality cases]\label{thm:fixed-support-degree}
Let
\[
\lambda=(L_1^{m_1},\dots,L_r^{m_r})\vdash n
\]
be written in compressed form. Then
\[
\deg(\lambda)=r(r-1)+A(\lambda)+B(\lambda).
\]
In particular,
\[
r(r-1)\le \deg(\lambda)\le r(r+1).
\]

Moreover:
\begin{enumerate}[leftmargin=2em]
\item
\[
\deg(\lambda)=r(r-1)
\]
if and only if
\[
\lambda=\delta_r:=(r,r-1,\dots,1).
\]

\item
\[
\deg(\lambda)=r(r+1)
\]
if and only if
\[
g_i>1\ \text{for all }i
\qquad\text{and}\qquad
m_i>1\ \text{for all }i.
\]
\end{enumerate}
\end{theorem}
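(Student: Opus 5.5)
We prove the degree formula by counting, for a fixed compressed partition $\lambda=(L_1^{m_1},\dots,L_r^{m_r})$, the distinct partitions obtainable by one elementary transfer. The paper says this formula was "previously established", so we may invoke it from~\cite{LyuLocal}. Still, we sketch a direct count, so that the equality cases follow transparently.

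A transfer moves one unit from a part of size $L_i$ to a part of size $L_j$. The result depends only on the ordered pair of part sizes $(L_i,L_j)$, where $i=j$ is allowed provided $m_i\ge2$. The new multiset is obtained by replacing $\{L_i,L_j\}$ with $\{L_i-1,L_j+1\}$. It differs from $\lambda$ unless $L_i-1=L_j$, and distinct pairs of sizes yield distinct results, since the multiset change $\{L_i,L_j\}\to\{L_i-1,L_j+1\}$ determines the pair. We therefore count admissible ordered pairs $(i,j)$ of distinct-size indices, together with the diagonal pairs $(i,i)$, as follows.

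\emph{Off-diagonal pairs $i\ne j$.} All $r(r-1)$ ordered pairs are admissible, except the pair "donor $L_i$, receiver $L_{i+1}$" when $L_i-1=L_{i+1}$, i.e.\ $g_i=1$. In that case the transfer only swaps the two sizes and returns $\lambda$, which is not an edge. Conversely, if $L_i=1$ (that is, $i=r$), the donor part vanishes, which is still a legitimate transfer (the support changes).

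\emph{Diagonal pairs $(i,i)$.} These require $m_i\ge 2$ and give $\{L_i-1,L_i+1\}$, a new partition. This contributes exactly $B(\lambda)$.

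\emph{Boundary term $g_r$.} For $i=r$, the relevant "gap" $g_r=L_r$ should account for the contribution $\alpha_r$: we must check that pairs involving the smallest part contribute in a way matching $\alpha_r=\mathbf 1_{\{L_r>1\}}$. The honest count of off-diagonal pairs is $r(r-1)-\#\{i<r:g_i=1\}=r(r-1)-(r-1)+\sum_{i<r}\alpha_i$. This is short of the claimed formula by $(r-1)-\ldots$. The reconciliation must therefore come from adjacency being counted as an undirected neighbour set (transfers toward a new part of size $0\to1$? no, parts cannot be created). This bookkeeping is the main obstacle. We expect the resolution in the paper's convention to be that the neighbour count from~\cite{LyuLocal} already has the stated form, and we would simply cite it, verifying on small cases ($\delta_r$ has degree $r(r-1)$, e.g.\ $(2,1)$ has neighbours $(3)$ and $(1,1,1)$, so degree $2$) to fix conventions.

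Granting the formula, the remaining claims are immediate. Since $0\le A,B\le r$, we get $r(r-1)\le\deg\le r(r-1)+2r=r(r+1)$.

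\emph{Minimum.} $\deg=r(r-1)$ iff $A=B=0$, i.e.\ all $g_i=1$ and all $m_i=1$. Then $L_r=g_r=1$ and $L_i=r-i+1$ with no repetitions, so $\lambda=\delta_r$. The converse is clear.

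\emph{Maximum.} $\deg=r(r+1)$ iff $A=B=r$, i.e.\ all $\alpha_i=\beta_i=1$, which is exactly the stated condition.
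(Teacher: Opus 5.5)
Once you fall back on citing \cite{LyuLocal} for $\deg(\lambda)=r(r-1)+A(\lambda)+B(\lambda)$, your argument is exactly the paper's. The paper also only cites the formula, and then reads off $r(r-1)\le\deg(\lambda)\le r(r+1)$ and both equality cases from $\alpha_i,\beta_i\in\{0,1\}$, just as you do. As a proof of the stated theorem, the proposal is therefore fine. The direct count, however, is not a valid sketch as written, and the obstacle you hit is not a matter of convention.

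The missing idea is the symmetry of adjacency. In $G_n$, two partitions are adjacent if \emph{either} is obtained from the other by a transfer. Your remark ``parts cannot be created'' discards exactly the neighbours you are missing.

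\begin{itemize}
\item If $\lambda$ arises from $\mu$ by moving a unit from a part $x\ge2$ to a part $y$, then $\mu$ arises from $\lambda$ by moving a unit from $y+1$ to $x-1$. Such a $\mu$ is already in your forward count.
\item If instead the donor in $\mu$ is a part $1$ that disappears, then from $\lambda$'s side the move replaces a part $L_i\ge2$ by the two parts $L_i-1$ and $1$.
\end{itemize}

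Such a split raises the number of parts by one, whereas a nontrivial forward transfer preserves it or lowers it by one. So these neighbours are new. Distinct $i$ give distinct splits, and since every $L_i$ with $i<r$ is at least $2$, there are
\[
\#\{i:L_i\ge2\}=(r-1)+\alpha_r
\]
of them. Adding this to your forward count $r(r-1)-(r-1)+\sum_{i<r}\alpha_i+B(\lambda)$ gives exactly $r(r-1)+A(\lambda)+B(\lambda)$. This is also where the boundary term $g_r=L_r$ enters: the smallest part can be split precisely when $L_r\ge2$.

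Your own test case already shows the gap. For $(2,1)$ your forward count produces only $(3)$, while $(1,1,1)$ is the split of the part $2$. With this correction your route becomes a self-contained derivation of the degree formula, which the paper does not provide.
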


\begin{proof}
By the exact degree formula established in~\cite{LyuLocal},
\[
\deg(\lambda)=r(r-1)+A(\lambda)+B(\lambda).
\]
Thus, for fixed support size, the degree is controlled entirely by the binary indicators of large gaps and repeated part sizes.

Since each $\alpha_i,\beta_i\in\{0,1\}$, we have
\[
0\le A(\lambda)\le r,
\qquad
0\le B(\lambda)\le r,
\]
and therefore
\[
r(r-1)\le \deg(\lambda)\le r(r+1).
\]

For the lower equality, we have
\[
\deg(\lambda)=r(r-1)
\iff
A(\lambda)=B(\lambda)=0.
\]
Thus $g_i=1$ and $m_i=1$ for all $i$, so
\[
L_i=\sum_{j=i}^r g_j=r-i+1,
\]
and hence
\[
\lambda=(r,r-1,\dots,1)=\delta_r.
\]
Conversely, $\delta_r$ satisfies $A(\delta_r)=B(\delta_r)=0$.

For the upper equality, we have
\[
\deg(\lambda)=r(r+1)
\iff
A(\lambda)=r \ \text{and}\ B(\lambda)=r,
\]
which is equivalent to
\[
g_i>1\ \text{for all }i
\qquad\text{and}\qquad
m_i>1\ \text{for all }i.
\]
\end{proof}

\begin{proposition}[Minimum degree]\label{prop:min-degree}
For every $n\ge 2$,
\[
\min_{\lambda\vdash n}\deg(\lambda)=1.
\]
This minimum is attained exactly at the two antenna vertices
\[
(n)
\qquad\text{and}\qquad
(1^n).
\]
For $n=1$, the unique vertex has degree $0$.
\end{proposition}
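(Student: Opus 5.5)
The plan is to read everything off Theorem~\ref{thm:fixed-support-degree}, $\deg(\lambda)=r(r-1)+A(\lambda)+B(\lambda)$, splitting by the support size $r=\supp(\lambda)$. The case $n=1$ is trivial: $G_1$ has the single vertex $(1)$, so its degree is $0$. Assume $n\ge 2$.

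\emph{Case $r\ge 2$.} Then $r(r-1)\ge 2$, so $\deg(\lambda)\ge 2$. Such partitions can never attain degree $1$, and they play no further role.

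\emph{Case $r=1$.} Here $\lambda=(L^m)$ with $mL=n$, and $\deg(\lambda)=\alpha_1+\beta_1$ with $\alpha_1=\mathbf 1_{\{L>1\}}$ and $\beta_1=\mathbf 1_{\{m>1\}}$. Since $n\ge 2$, at least one of $L>1$ or $m>1$ holds, so $\deg(\lambda)\ge 1$. Equality holds exactly when one indicator vanishes:
\begin{itemize}
\item $L=1$ gives $\lambda=(1^n)$;
\item $m=1$ gives $\lambda=(n)$.
\end{itemize}
These two partitions are distinct because $n\ge 2$, and each has degree exactly $1$. If both $L>1$ and $m>1$, the degree is $2$.

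Combining the two cases, every $\lambda\vdash n$ has $\deg(\lambda)\ge 1$, and equality holds precisely for $(n)$ and $(1^n)$. This gives both the value of the minimum and its exact set of attainers.

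There is no real obstacle in this argument. The only point that needs care is that for $r=1$ the gap indicator is $g_1=L_1$, with $\alpha_1=\mathbf 1_{\{L_1>1\}}$, following the convention $g_r=L_r$. As a sanity check against the local degree formula, $(n)$ is adjacent only to $(n-1,1)$, and dually $(1^n)$ is adjacent only to $(2,1^{n-2})$.
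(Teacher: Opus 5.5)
Your proof is correct and follows essentially the same route as the paper: split by support size, use $r(r-1)\ge 2$ for $r\ge 2$, and analyse $(L^m)$ with $mL=n$ when $r=1$. The only cosmetic difference is that you compute $\deg((1^n))=1$ directly from the indicators, whereas the paper obtains it from $\deg((n))=1$ by conjugation symmetry.
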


\begin{proof}
The partition $(n)$ has compressed form $(n^1)$, so $r=1$, $g_1=n$, and $m_1=1$. Hence
\[
\deg((n))=1.
\]
By conjugation symmetry, the same holds for $(1^n)$.

Now let $\lambda\vdash n$ be any other partition with $n\ge 2$. If $\supp(\lambda)\ge 2$, then
\[
\deg(\lambda)\ge r(r-1)\ge 2.
\]
So any degree-$1$ partition must have support size $1$, hence must be of the form $(a^m)$ with $am=n$. If $m=1$, then $\lambda=(n)$. If $a=1$, then $\lambda=(1^n)$. Otherwise $a>1$ and $m>1$, so $A(\lambda)=B(\lambda)=1$, hence $\deg(\lambda)=2$.
\end{proof}

\section{Exact extremal degree theory}\label{sec:extremal}

Let
\[
s=s(n):=\max\Bigl\{r:\frac{r(r+1)}2\le n\Bigr\}.
\]
Equivalently,
\[
T_s\le n<T_{s+1},
\qquad
T_s=\frac{s(s+1)}2.
\]
We also write
\[
n=T_s+q,
\qquad
0\le q\le s.
\]

\begin{proposition}[Triangular numbers]\label{prop:triangular}
Let
\[
n=T_s=\frac{s(s+1)}2.
\]
Then
\[
\Delta_n=s(s-1),
\]
and the unique degree-maximizing partition of $n$ is the staircase partition
\[
\delta_s=(s,s-1,\dots,1).
\]
\end{proposition}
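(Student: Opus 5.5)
The plan is to combine Theorem~\ref{thm:fixed-support-degree} with Lemma~\ref{lem:excess-identity}, treating separately the partitions of maximal support size $s$ and those of smaller support.

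\emph{Support-maximal partitions.} First I note that every $\lambda\vdash n$ has $r=\supp(\lambda)\le s$, since $T_r\le n$; this is the case $q=0$ of the excess identity. Suppose $\supp(\lambda)=s$. Lemma~\ref{lem:excess-identity} gives
\[
A(\lambda)+B(\lambda)\le n-T_s=0,
\]
so $A(\lambda)=B(\lambda)=0$. Part~1 of Theorem~\ref{thm:fixed-support-degree} then forces $\lambda=\delta_s$, and $\deg(\delta_s)=s(s-1)$.

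\emph{Smaller support.} Now let $r=\supp(\lambda)\le s-1$. The upper bound of Theorem~\ref{thm:fixed-support-degree} gives
\[
\deg(\lambda)\le r(r+1)\le (s-1)s.
\]
To make the inequality strict I will use Lemma~\ref{lem:excess-identity}, which gives $A(\lambda)+B(\lambda)\le n-T_r$, so that
\[
\deg(\lambda)\le r(r-1)+\min(2r,\,n-T_r).
\]
When $r<s-1$, we have $r(r+1)\le (s-2)(s-1)<s(s-1)$ for $s\ge2$, and the inequality is already strict. The only delicate case is $r=s-1$, where I must exclude $\deg(\lambda)=s(s-1)$.

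\emph{The main obstacle: $r=s-1$.} Equality would require $A=B=s-1$, that is, $g_i\ge2$ and $m_i\ge2$ for all $i$. In that case $L_i\ge 2(s-i)$, and hence
\[
n\ge \sum_{i=1}^{s-1} 2L_i\ge 4T_{s-1}=2s(s-1).
\]
Comparing with $n=s(s+1)/2$, this requires $2s(s-1)\le s(s+1)/2$, i.e.\ $4(s-1)\le s+1$, i.e.\ $s\le 5/3$. So equality is impossible for $s\ge2$. The degenerate cases are checked directly: for $s=1$ ($n=1$) the only partition is $\delta_1$, and for $s=0$ there is nothing to prove.

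\emph{Conclusion.} Every partition other than $\delta_s$ has degree strictly less than $s(s-1)$, so $\Delta_n=s(s-1)$ with $\delta_s$ as the unique maximizer.
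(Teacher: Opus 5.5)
Your proof is correct and uses the same case split as the paper. The case $r=s$ goes through Lemma~\ref{lem:excess-identity} to force $A=B=0$ and hence $\lambda=\delta_s$, the case $r\le s-2$ is handled by the crude bound $r(r+1)\le (s-2)(s-1)$, and the case $r=s-1$ reduces to excluding $A=B=s-1$.

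The only difference is in that last case. You use the direct size bound $n\ge 4T_{s-1}=2s(s-1)>T_s$, whereas the paper uses $2(s-1)>s=T_s-T_{s-1}$. Your version covers $s=2$ uniformly, while the paper has to treat $s=2$ separately via Proposition~\ref{prop:min-degree}. Your intermediate bound involving $\min(2r,n-T_r)$ is never actually used.
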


\begin{proof}
Let $\lambda\vdash T_s$ and write $r=\supp(\lambda)$. Since $T_r\le T_s$, we have $r\le s$.

If $r\le s-2$, then
\[
\deg(\lambda)\le r(r+1)\le (s-2)(s-1)<s(s-1).
\]
If $r=s-1$, then Theorem~\ref{thm:fixed-support-degree} gives
\[
\deg(\lambda)\le r(r+1)=s(s-1).
\]
If equality held, then we would have
\[
A(\lambda)=B(\lambda)=r=s-1.
\]
For $s\ge 3$, this would imply
\[
A(\lambda)+B(\lambda)=2(s-1)>s=T_s-T_{s-1},
\]
contradicting Lemma~\ref{lem:excess-identity}. For $s=2$, we have $r=1$, so $\lambda$ has
support size $1$; but by Proposition~\ref{prop:min-degree} every such partition has degree $1$,
not $2=s(s-1)$. Hence in all cases
\[
\deg(\lambda)<s(s-1)
\qquad (r\le s-1).
\]
If $r=s$, then Lemma~\ref{lem:excess-identity} gives
\[
A(\lambda)+B(\lambda)\le T_s-T_s=0,
\]
hence $A(\lambda)=B(\lambda)=0$, so $\lambda=\delta_s$ and $\deg(\lambda)=s(s-1)$.
Therefore $\delta_s$ is the unique degree-maximizing partition.
\end{proof}

\begin{theorem}[Support-maximality of global degree maximizers]\label{thm:support-maximal}
Let
\[
n=T_s+q,
\qquad
1\le q\le s.
\]
If $\lambda\vdash n$ satisfies
\[
\deg(\lambda)=\Delta_n,
\]
then
\[
\supp(\lambda)=s.
\]
\end{theorem}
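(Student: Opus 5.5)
We compare a lower bound for $\Delta_n$, obtained by exhibiting one partition of support $s$, with upper bounds for every partition of smaller support. Note first that $\supp(\lambda)\le s$ always holds: by Lemma~\ref{lem:excess-identity}, $T_r\le n<T_{s+1}$.

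\emph{Lower bound.} Since $q\ge 1$, the partition obtained from $\delta_s$ by adding $q$ to its largest part, namely $(s+q,s-1,\dots,1)$, has support $s$. Its first gap is $g_1=q+1>1$, so $A\ge 1$, and Theorem~\ref{thm:fixed-support-degree} gives
\[
\Delta_n\ge s(s-1)+1 .
\]
(When $q=1$ this partition has gap $g_1=2$ and is already a valid witness.)

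\emph{Upper bound for $r\le s-2$.} By Theorem~\ref{thm:fixed-support-degree},
\[
\deg(\lambda)\le r(r+1)\le (s-2)(s-1)<s(s-1)+1
\]
for every $s\ge 2$. For $s=1$ the case $r\le s-2$ does not arise.

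\emph{Upper bound for $r=s-1$.} This is the main case. Theorem~\ref{thm:fixed-support-degree} gives
\[
\deg(\lambda)=(s-1)(s-2)+A+B,
\]
with $A,B\le s-1$. Lemma~\ref{lem:excess-identity} gives
\[
A+B\le n-T_{s-1}=s+q\le 2s .
\]
Combining the two, $A+B\le \min(2s-2,\,s+q)$, so
\[
\deg(\lambda)\le (s-1)(s-2)+\min(2s-2,\,s+q).
\]
We need this to be strictly below $s(s-1)+1$, that is, $A+B<2s-1$. This holds because $A+B\le 2(s-1)=2s-2$. Indeed,
\[
(s-1)(s-2)+2s-2=s(s-1)<s(s-1)+1 .
\]
So every partition of support $s-1$ has degree at most $s(s-1)<\Delta_n$. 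The lower-bound witness therefore strictly beats all partitions of support below $s$, and any maximizer must have support exactly $s$.

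\emph{Small cases.} The hard step is only to make sure the witness is genuinely valid and that the $r=s-1$ bound never ties. Since the final inequality is strict and uses only $A,B\le r$, no delicate counting is needed. I would check $s=1$ separately: there $n=2$ and $q=1$, and the maximizer must have support at most $1$, which equals $s$, so the statement is trivial.
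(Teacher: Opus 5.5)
Your proof is correct and follows essentially the paper's argument. The witness $(s+q,s-1,\dots,1)$ gives $\Delta_n\ge s(s-1)+1$, and every partition of support $r\le s-1$ has degree at most $r(r+1)\le s(s-1)$. Your split into $r\le s-2$ and $r=s-1$, and the appeal to Lemma~\ref{lem:excess-identity} for $A+B\le s+q$, are harmless but unnecessary, since $A,B\le r$ alone suffices.
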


\begin{proof}
Since
\[
T_s<n<T_{s+1},
\]
every partition of $n$ has support size at most $s$.

Let $\mu\vdash n$ have support size $r\le s-1$. Then
\[
\deg(\mu)\le r(r+1)\le (s-1)s=s(s-1).
\]

Now consider
\[
\lambda_q=(s+q,s-1,s-2,\dots,1)\vdash n.
\]
It has support size $s$, all multiplicities equal to $1$, and exactly one active gap bonus, so
\[
\deg(\lambda_q)=s(s-1)+1.
\]
Hence no maximizer can have support at most $s-1$.
\end{proof}

\begin{corollary}[Reduction to the support-maximal layer]\label{cor:support-maximal-reduction}
Let
\[
n=T_s+q,
\qquad
0\le q\le s.
\]
Then
\[
\Delta_n
=
s(s-1)+
\max\bigl\{A(\lambda)+B(\lambda): \lambda\vdash n,\ \supp(\lambda)=s\bigr\}.
\]
In particular,
\[
s(s-1)\le \Delta_n\le s(s-1)+q.
\]
Moreover, if $q>0$, then
\[
\Delta_n\ge s(s-1)+1.
\]
\end{corollary}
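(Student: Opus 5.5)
The corollary follows by combining Proposition~\ref{prop:triangular} (the case $q=0$), Theorem~\ref{thm:support-maximal} (the case $q\ge1$), the degree formula of Theorem~\ref{thm:fixed-support-degree}, and Lemma~\ref{lem:excess-identity}. We argue in three steps.

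\textbf{Step 1: the displayed formula for $\Delta_n$.} First we observe that the support-maximal stratum is nonempty for every $0\le q\le s$. Indeed, $(s+q,s-1,\dots,1)$ has support $s$; when $q=0$ it is $\delta_s$. If $q=0$, Proposition~\ref{prop:triangular} says the unique maximizer is $\delta_s$, which has support $s$. If $1\le q\le s$, Theorem~\ref{thm:support-maximal} says every maximizer has support exactly $s$. In either case the maximum of $\deg$ over all $\lambda\vdash n$ equals its maximum over partitions with $\supp(\lambda)=s$. On that stratum, Theorem~\ref{thm:fixed-support-degree} gives $\deg(\lambda)=s(s-1)+A(\lambda)+B(\lambda)$. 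Taking the maximum yields the displayed formula.

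\textbf{Step 2: the two-sided bound.} For the lower bound we use $A+B\ge 0$, which gives $\Delta_n\ge s(s-1)$. For the upper bound we apply Lemma~\ref{lem:excess-identity} with $r=s$, which gives $A(\lambda)+B(\lambda)\le n-T_s=q$ for every $\lambda$ of support $s$. Hence $\Delta_n\le s(s-1)+q$.

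\textbf{Step 3: the strict improvement when $q>0$.} Here we exhibit the partition $\lambda_q=(s+q,s-1,\dots,1)$ from the proof of Theorem~\ref{thm:support-maximal}. Its only gap bonus is $g_1=q+1>1$, and all multiplicities are $1$, so $A(\lambda_q)+B(\lambda_q)=1$. By Step 1 this gives $\Delta_n\ge s(s-1)+1$.

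The only delicate point is making sure the reduction in Step 1 also covers $q=0$, since Theorem~\ref{thm:support-maximal} assumes $q\ge1$. Proposition~\ref{prop:triangular} handles that case directly.
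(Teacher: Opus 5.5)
Your proof is correct and follows the paper's route. The paper settles $q=0$ by Proposition~\ref{prop:triangular}, handles $q>0$ by Theorem~\ref{thm:support-maximal} together with the degree formula of Theorem~\ref{thm:fixed-support-degree}, and gets the upper bound from Lemma~\ref{lem:excess-identity}, exactly as you do. You additionally make explicit two points the paper leaves implicit in the proof of Theorem~\ref{thm:support-maximal}: that the support-$s$ stratum is nonempty, and that the witness $\lambda_q=(s+q,s-1,\dots,1)$ gives the bound $\Delta_n\ge s(s-1)+1$.
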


\begin{proof}
If $q=0$, the claim follows from Proposition~\ref{prop:triangular}. If $q>0$, apply Theorem~\ref{thm:support-maximal} and then Theorem~\ref{thm:fixed-support-degree}. The upper bound follows from Lemma~\ref{lem:excess-identity}.
\end{proof}

\begin{lemma}[Separate triangular lower bounds for gap and multiplicity bonuses]\label{lem:separate-triangular-bounds}
Let
\[
n=T_s+q,
\qquad
0\le q\le s,
\]
and let $\lambda\vdash n$ have support size $s$. Write
\[
A(\lambda)=a,
\qquad
B(\lambda)=b.
\]
Then
\[
q\ge T_a+T_b.
\]
\end{lemma}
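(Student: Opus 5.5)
We start from the exact excess identity of Lemma~\ref{lem:excess-identity} with $r=s$, which gives
\[
q=n-T_s=\sum_{i=1}^s i\,(g_i-1)+\sum_{i=1}^s (m_i-1)L_i .
\]
Both sums consist of nonnegative terms. It therefore suffices to bound each sum from below separately: we will show that the gap sum is at least $T_a$ and the multiplicity sum is at least $T_b$. The proof of Lemma~\ref{lem:excess-identity} only used the crude bounds $i(g_i-1)\ge 1$ and $(m_i-1)L_i\ge1$. The improvement comes from exploiting the weights $i$ and $L_i$, which are distinct positive integers.

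\emph{Gap sum.} Let $I=\{i: g_i>1\}$, so $|I|=a$. For $i\in I$ we have $i(g_i-1)\ge i$. Hence
\[
\sum_{i=1}^s i(g_i-1)\ge \sum_{i\in I} i .
\]
Since $I$ is a set of $a$ distinct positive integers, $\sum_{i\in I} i\ge 1+2+\cdots+a=T_a$.

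\emph{Multiplicity sum.} Let $J=\{i: m_i>1\}$, so $|J|=b$. For $i\in J$ we have $(m_i-1)L_i\ge L_i$. Hence
\[
\sum_{i=1}^s (m_i-1)L_i\ge \sum_{i\in J} L_i .
\]
The part sizes $L_1>\cdots>L_s>0$ are distinct positive integers, so any $b$ of them sum to at least $T_b$. Equivalently, one can use $L_i\ge s-i+1$, which follows from $g_j\ge1$; then the $b$ values $s-i+1$ for $i\in J$ are distinct positive integers.

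Adding the two bounds gives $q\ge T_a+T_b$. No step is a real obstacle. The only point needing care is to see that the relevant weights (the indices $i$ in the gap sum, and the part sizes $L_i$ in the multiplicity sum) are pairwise distinct, which is what upgrades the count bound $a+b\le q$ to the triangular bound. The hypothesis $q\le s$ is not used beyond ensuring that support size $s$ is possible.
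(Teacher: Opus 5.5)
Your proof is correct and is essentially the paper's argument: apply the excess identity with $r=s$, then bound the gap sum below by $\sum_{i\in I} i\ge T_a$ and the multiplicity sum by $\sum_{j\in J}L_j\ge T_b$, using that the indices and the part sizes are distinct positive integers. Your remark that $q\le s$ plays no real role is also accurate, since the bound holds for any support-$s$ partition with $n-T_s=q$.
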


\begin{proof}
By Lemma~\ref{lem:excess-identity},
\[
q
=
\sum_{i=1}^s i(g_i-1)+\sum_{i=1}^s (m_i-1)L_i.
\]
If $I=\{i:g_i>1\}$ and $J=\{i:m_i>1\}$, then $|I|=a$ and $|J|=b$, so
\[
q\ge \sum_{i\in I} i+\sum_{j\in J} L_j.
\]
The first sum is at least $T_a$, and the second is at least $T_b$, since the relevant part sizes are distinct positive integers.
\end{proof}

\begin{definition}\label{def:rho}
For $q\ge 0$, define
\[
\rho(q):=\max\{a+b:\ a,b\ge 0,\ T_a+T_b\le q\}.
\]
\end{definition}

\begin{proposition}[Explicit formula for $\rho(q)$]\label{prop:rho-explicit}
For every $q\ge 0$,
\[
\rho(q)=\left\lfloor\sqrt{4q+1}\right\rfloor-1.
\]
Equivalently, $\rho(q)$ is the largest integer $c\ge 0$ such that
\[
\left\lfloor\frac{(c+1)^2}{4}\right\rfloor\le q.
\]
\end{proposition}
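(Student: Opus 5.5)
Fix $c=a+b$. The plan is to first compute, for each $c\ge 0$, the minimum of $T_a+T_b$ over $a+b=c$, $a,b\ge 0$. Then $\rho(q)$ is the largest $c$ for which this minimum is at most $q$. Since $T_m$ is convex in $m$, the minimum is attained at the balanced split $a=\lfloor c/2\rfloor$, $b=\lceil c/2\rceil$. To see this, note that if $a\le b-2$, then replacing $(a,b)$ by $(a+1,b-1)$ changes $T_a+T_b$ by $(a+1)-b<0$.

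Evaluating at the balanced split gives the minimum in closed form. For $c=2k$ it is $2T_k=k(k+1)$. For $c=2k+1$ it is $T_k+T_{k+1}=(k+1)^2$. In both cases this equals $\lfloor (c+1)^2/4\rfloor$: for $c=2k$ one has $\lfloor(2k+1)^2/4\rfloor=k^2+k$, and for $c=2k+1$ one has $(2k+2)^2/4=(k+1)^2$.

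Next, the function $f(c)=\lfloor (c+1)^2/4\rfloor$ is strictly increasing in $c$. Hence the set of feasible $c$ is an initial segment $\{0,1,\dots,c^*\}$; it is nonempty because $c=0$ gives $f(0)=0\le q$. It follows that $\rho(q)$ is the largest $c$ with $\lfloor (c+1)^2/4\rfloor\le q$, which is the second (equivalent) form of the statement.

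It remains to convert this to $\lfloor\sqrt{4q+1}\rfloor-1$, and this integer bookkeeping is the only delicate step. Put $m=c+1$. We must show that $\lfloor m^2/4\rfloor\le q$ holds if and only if $m\le\sqrt{4q+1}$, i.e.\ $m^2\le 4q+1$. Split by the parity of $m$.

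\begin{itemize}
\item If $m$ is even, then $\lfloor m^2/4\rfloor\le q$ means $m^2\le 4q$. Since $m^2\equiv 0\pmod 4$, this is equivalent to $m^2\le 4q+1$.
\item If $m$ is odd, then $\lfloor m^2/4\rfloor=(m^2-1)/4$, so the condition is exactly $m^2\le 4q+1$.
\end{itemize}

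Therefore the largest admissible $m$ is $\lfloor\sqrt{4q+1}\rfloor$, and so $\rho(q)=\lfloor\sqrt{4q+1}\rfloor-1$. This completes both forms of the proposition.
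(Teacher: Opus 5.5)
Your proof is correct and follows essentially the same route as the paper: you minimize $T_a+T_b$ at the balanced split to obtain $\lfloor (c+1)^2/4\rfloor$, then convert to $\lfloor\sqrt{4q+1}\rfloor-1$ using the residues of squares modulo $4$. Your version is slightly more complete, because you justify the balanced-split minimum by the exchange $(a,b)\mapsto(a+1,b-1)$ and state the monotonicity of $\lfloor (c+1)^2/4\rfloor$ explicitly, whereas the paper uses both without comment.
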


\begin{proof}
Fix $c\ge 0$. Among pairs $(a,b)$ with $a+b=c$, the quantity $T_a+T_b$ is minimized when $a$ and $b$ are as close as possible. Writing $c=2m$ or $c=2m+1$, we obtain
\[
\min_{a+b=c}(T_a+T_b)
=
\begin{cases}
m(m+1), & c=2m,\\
(m+1)^2, & c=2m+1.
\end{cases}
\]
This equals
\[
\left\lfloor\frac{(c+1)^2}{4}\right\rfloor.
\]
Thus $\rho(q)$ is the largest $c$ such that
\[
\left\lfloor\frac{(c+1)^2}{4}\right\rfloor\le q.
\]

Set
\[
m:=\left\lfloor\sqrt{4q+1}\right\rfloor.
\]
Then
\[
m^2\le 4q+1 < (m+1)^2.
\]
For $c=m-1$, we have
\[
\left\lfloor\frac{(c+1)^2}{4}\right\rfloor
=
\left\lfloor\frac{m^2}{4}\right\rfloor
\le
\left\lfloor\frac{4q+1}{4}\right\rfloor
=q,
\]
so $c=m-1$ is admissible.

For $c=m$, we have
\[
(c+1)^2=(m+1)^2>4q+1.
\]
Since $(m+1)^2\equiv 0$ or $1\pmod 4$, this strict inequality implies
\[
(m+1)^2\ge 4q+4.
\]
Therefore
\[
\left\lfloor\frac{(c+1)^2}{4}\right\rfloor
=
\left\lfloor\frac{(m+1)^2}{4}\right\rfloor
\ge q+1,
\]
so $c=m$ is not admissible.

Hence the largest admissible $c$ is $m-1$, that is,
\[
\rho(q)=m-1=\left\lfloor\sqrt{4q+1}\right\rfloor-1.
\]
\end{proof}

\begin{theorem}[Exact maximal degree]\label{thm:exact-max-degree}
Let
\[
n=T_s+q,
\qquad
0\le q\le s.
\]
Then
\[
\Delta_n=s(s-1)+\rho(q).
\]
Equivalently,
\[
\Delta_n
=
s(s-1)+\left\lfloor\sqrt{4q+1}\right\rfloor-1.
\]
\end{theorem}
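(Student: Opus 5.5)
The plan is to prove matching upper and lower bounds on $\max\{A(\lambda)+B(\lambda):\lambda\vdash n,\ \supp(\lambda)=s\}$, and then apply Corollary~\ref{cor:support-maximal-reduction}. The case $q=0$ is already Proposition~\ref{prop:triangular}, since $\rho(0)=0$. So I will assume $1\le q\le s$.

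\emph{Upper bound.} Let $\lambda\vdash n$ have support size $s$, and put $a=A(\lambda)$, $b=B(\lambda)$. Lemma~\ref{lem:separate-triangular-bounds} gives $T_a+T_b\le q$. By Definition~\ref{def:rho}, this means $a+b\le\rho(q)$. Corollary~\ref{cor:support-maximal-reduction} then yields $\Delta_n\le s(s-1)+\rho(q)$.

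\emph{Lower bound.} Choose $a,b\ge0$ with $a+b=\rho(q)$ and $T_a+T_b\le q$. Such a pair exists by the definition of $\rho$. Start from $\delta_s=(s,s-1,\dots,1)$ and build $\lambda$ in three steps.
\begin{enumerate}[leftmargin=2em]
\item For each $i=1,\dots,a$, raise $g_i$ from $1$ to $2$. This adds one unit to each of $L_1,\dots,L_i$, so it costs $i$ units. The total cost is $T_a$, and it produces active gaps exactly at $i=1,\dots,a$.
\item Give multiplicity $2$ to the $b$ smallest part sizes, namely those at indices $s-b+1,\dots,s$. This costs the sum of their values.
\item Put the remaining excess $q-T_a-T_b\ge0$ into the largest part $L_1$, which only enlarges $g_1$.
\end{enumerate}

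The step that needs care is the non-interference of the first two operations. I need the parts duplicated in step~2 to still have values $1,\dots,b$, so that their cost is exactly $T_b$. This holds if the index sets $\{1,\dots,a\}$ and $\{s-b+1,\dots,s\}$ are disjoint, that is, if $a+b\le s$. Since $T_a\ge a$ and $T_b\ge b$, we get $a+b\le T_a+T_b\le q\le s$, so disjointness holds.

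With disjointness, the parts at indices $s-b+1,\dots,s$ are untouched by step~1. Their values are $1,\dots,b$, so the duplication costs $T_b$. The support stays exactly $s$, because every gap remains at least $1$ and no part size is created or removed.

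Step~3 can only increase $g_1$. It may create an additional gap bonus when $a=0$, but it never destroys one. Hence $A(\lambda)\ge a$ and $B(\lambda)=b$, so $\deg(\lambda)\ge s(s-1)+\rho(q)$ by Theorem~\ref{thm:fixed-support-degree}. Together with the upper bound this gives equality. Proposition~\ref{prop:rho-explicit} then gives the closed form $\Delta_n=s(s-1)+\lfloor\sqrt{4q+1}\rfloor-1$.
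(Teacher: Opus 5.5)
Your proof is correct and is essentially the paper's argument. It uses the same upper bound via Lemma~\ref{lem:separate-triangular-bounds}, Definition~\ref{def:rho} and Corollary~\ref{cor:support-maximal-reduction}, and the same staircase construction (top $a$ gaps raised to $2$, bottom $b$ parts doubled, slack absorbed into $g_1$); the only differences are that you take any admissible pair from the definition of $\rho$ rather than the balanced pair $(\lfloor\rho(q)/2\rfloor,\lceil\rho(q)/2\rceil)$, and you settle for $A(\lambda)\ge a$. One point you leave implicit is that the largest part is not doubled when the slack is positive (otherwise step 3 would split it or cost twice), which holds because $b=s$ would give $T_s\le q\le s$, forcing $s=q=1$ and zero slack.
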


\begin{proof}
By Corollary~\ref{cor:support-maximal-reduction},
\[
\Delta_n
=
s(s-1)+
\max\bigl\{A(\lambda)+B(\lambda):\lambda\vdash n,\ \supp(\lambda)=s\bigr\}.
\]
If $\lambda\vdash n$ has support size $s$ and
\[
A(\lambda)=a,
\qquad
B(\lambda)=b,
\]
then Lemma~\ref{lem:separate-triangular-bounds} gives
\[
T_a+T_b\le q.
\]
By Definition~\ref{def:rho}, this implies
\[
A(\lambda)+B(\lambda)\le \rho(q).
\]
Therefore
\[
\Delta_n\le s(s-1)+\rho(q).
\]

It remains to construct a partition $\lambda\vdash n$ with support size $s$ such that
\[
A(\lambda)+B(\lambda)=\rho(q).
\]

Set
\[
r:=\rho(q),
\qquad
a:=\left\lfloor\frac r2\right\rfloor,
\qquad
b:=\left\lceil\frac r2\right\rceil.
\]
By Proposition~\ref{prop:rho-explicit},
\[
T_a+T_b\le q.
\]
Let
\[
\ell:=q-T_a-T_b\ge 0.
\]
Since $a+b=r=\rho(q)\le q\le s$, we have $s-b+1>a$. Thus the multiplicity bonuses are placed strictly to the right of the modified gap block, so the corresponding tail part sizes remain the staircase values $b,b-1,\dots,1$.

We define a partition in compressed form by prescribing its gaps and multiplicities. For the gaps, set
\[
g_1=
\begin{cases}
2+\ell, & a>0,\\
1, & a=0,
\end{cases}
\qquad
g_2=\cdots=g_a=2,
\qquad
g_{a+1}=\cdots=g_s=1.
\]
For the multiplicities, set
\[
m_1=\cdots=m_{s-b}=1,
\qquad
m_{s-b+1}=\cdots=m_s=2,
\]
with the convention that if $b=0$, then all multiplicities are equal to $1$.

These data determine a partition $\lambda$ with support size $s$: indeed, the gaps define a strictly decreasing sequence of positive part sizes
\[
L_i=\sum_{j=i}^s g_j
\qquad (1\le i\le s),
\]
and the multiplicities specify how many times each part size occurs.

By construction, exactly the first $a$ gaps exceed $1$, so
\[
A(\lambda)=a.
\]
Likewise, exactly the last $b$ multiplicities exceed $1$, so
\[
B(\lambda)=b.
\]
Hence
\[
A(\lambda)+B(\lambda)=a+b=r=\rho(q).
\]

It remains to check that $|\lambda|=n$. Starting from the staircase partition
\[
\delta_s=(s,s-1,\dots,1),
\]
the modified gaps contribute an excess of
\[
\ell+(1+2+\cdots+a)=\ell+T_a,
\]
while the modified multiplicities contribute an excess of
\[
1+2+\cdots+b=T_b.
\]
Therefore
\[
|\lambda|
=
T_s+\ell+T_a+T_b
=
T_s+q
=
n.
\]
Thus $\lambda\vdash n$, $\supp(\lambda)=s$, and by Theorem~\ref{thm:fixed-support-degree},
\[
\deg(\lambda)=s(s-1)+A(\lambda)+B(\lambda)
=s(s-1)+\rho(q).
\]

Combining this with the upper bound gives
\[
\Delta_n=s(s-1)+\rho(q),
\]
as claimed.
\end{proof}

\begin{corollary}[Step structure above triangular numbers]\label{cor:step-structure}
For $n=T_s+q$ with $0\le q\le s$, the maximal degree depends only on $s$ and $q$, and its increment above the triangular baseline is
\[
\Delta_n-s(s-1)=\left\lfloor\sqrt{4q+1}\right\rfloor-1.
\]
In particular, the first values are
\[
0,1,2,2,3,3,4,4,4,\dots
\]
for $q=0,1,2,3,4,5,6,7,8,\dots$.
\end{corollary}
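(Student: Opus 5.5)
The corollary follows almost immediately from Theorem~\ref{thm:exact-max-degree}, so the plan is to read off the first claim and then evaluate the step function at small values of $q$.

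\emph{Dependence on $(s,q)$ and the increment.} By Theorem~\ref{thm:exact-max-degree}, for $n=T_s+q$ with $0\le q\le s$ we have $\Delta_n=s(s-1)+\rho(q)$, and Proposition~\ref{prop:rho-explicit} gives $\rho(q)=\lfloor\sqrt{4q+1}\rfloor-1$. The pair $(s,q)$ is uniquely determined by $n$, since $s$ is the largest integer with $T_s\le n$. Hence $\Delta_n$ is a function of $s$ and $q$ alone. Subtracting the baseline $s(s-1)$ gives the stated increment formula $\Delta_n-s(s-1)=\lfloor\sqrt{4q+1}\rfloor-1$. I would also note that the increment depends only on $q$, not on $s$; this is the sense of the phrase ``step structure.''

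\emph{The listed values.} For $q=0,\dots,8$ the numbers $4q+1$ are $1,5,9,13,17,21,25,29,33$. Their integer square roots are $1,2,3,3,4,4,5,5,5$. Subtracting $1$ gives $0,1,2,2,3,3,4,4,4$, as claimed. A useful cross-check is the equivalent description from Proposition~\ref{prop:rho-explicit}: $\rho(q)$ is the largest $c$ with $\lfloor (c+1)^2/4\rfloor\le q$. The thresholds $\lfloor (c+1)^2/4\rfloor$ for $c=0,1,2,3,4,5$ are $0,1,2,4,6,9$. So $\rho$ jumps to $c$ exactly at $q=0,1,2,4,6,9,\dots$, which agrees with the list.

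These values are only attained for $q\le s$, so the value at $q$ is realized only when $s\ge q$. This is a standing hypothesis of the corollary and needs no further argument.

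There is no real obstacle here. The only point requiring care is the arithmetic for $q=0,\dots,8$, which the threshold computation above confirms.
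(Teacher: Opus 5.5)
Your proposal is correct and follows the paper's route: the paper also deduces the corollary immediately from Theorem~\ref{thm:exact-max-degree} together with Proposition~\ref{prop:rho-explicit}. Your evaluation of $\lfloor\sqrt{4q+1}\rfloor-1$ for $q=0,\dots,8$ is accurate, as is the cross-check via the thresholds $\lfloor (c+1)^2/4\rfloor$; both just spell out arithmetic the paper leaves to the reader.
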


\begin{proof}
Immediate from Theorem~\ref{thm:exact-max-degree} and Proposition~\ref{prop:rho-explicit}.
\end{proof}

Thus the value-level part of the extremal degree theory is completely explicit.

\section{Small excess as an illustration of the extremal theory}\label{sec:small-excess}

We now return to the first layers above a triangular number as an illustration of Theorem~\ref{thm:exact-max-degree}. Write
\[
n=T_s+q,
\qquad
T_s=\frac{s(s+1)}2,
\qquad
0\le q\le s.
\]

\begin{corollary}[The first values above a triangular number]\label{cor:first-small-q-values}
For $0\le q\le 5$, one has
\[
\Delta_{T_s+q}-s(s-1)
=
0,1,2,2,3,3.
\]
Equivalently,
\[
\Delta_{T_s}=s(s-1),
\]
\[
\Delta_{T_s+1}=s(s-1)+1,
\qquad
\Delta_{T_s+2}=s(s-1)+2,
\]
and
\[
\Delta_{T_s+3}=s(s-1)+2,
\qquad
\Delta_{T_s+4}=s(s-1)+3,
\qquad
\Delta_{T_s+5}=s(s-1)+3.
\]
\end{corollary}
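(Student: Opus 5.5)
This corollary is a direct specialization of Theorem~\ref{thm:exact-max-degree}, so the plan is simply to evaluate $\rho(q)=\lfloor\sqrt{4q+1}\rfloor-1$ for $q=0,1,\dots,5$. The only hypothesis to watch is $0\le q\le s$, which is needed for $n=T_s+q$ to be the staircase decomposition of $n$. The statement is read with that standing convention: each displayed equality is asserted for those $s$ with $s\ge q$. For example, $\Delta_{T_s+5}$ is only meaningful in this form when $s\ge 5$, since otherwise $T_s+5\ge T_{s+1}$ and the decomposition changes. This is the only conceptual point I would flag, and I would state it explicitly at the start of the proof.

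Next I would compute $4q+1$ for $q=0,\dots,5$. This gives $1,5,9,13,17,21$, whose integer square-root floors are $1,2,3,3,4,4$. Subtracting $1$ yields the increments $0,1,2,2,3,3$.

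As a cross-check, I would use the equivalent characterization in Proposition~\ref{prop:rho-explicit}. There, $\rho(q)$ is the largest $c$ with $\lfloor (c+1)^2/4\rfloor\le q$. The thresholds $\lfloor (c+1)^2/4\rfloor$ for $c=0,1,2,3$ are $0,1,2,4$. Hence $\rho$ jumps exactly at $q=1,2,4$, and the next jump is at $q=6$ (the threshold for $c=4$). This matches the list and also agrees with Corollary~\ref{cor:step-structure}.

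Finally, I would substitute these increments into $\Delta_n=s(s-1)+\rho(q)$ to obtain the six displayed formulas. The case $q=0$ is recovered alternatively from Proposition~\ref{prop:triangular}. There is no real obstacle beyond the bookkeeping of the range condition $q\le s$.
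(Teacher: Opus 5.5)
Your proposal is correct and follows the paper's argument exactly: the paper states the corollary as immediate from Theorem~\ref{thm:exact-max-degree}, and your evaluation $\lfloor\sqrt{4q+1}\rfloor-1=0,1,2,2,3,3$ for $q=0,\dots,5$, with the threshold cross-check, simply makes that specialization explicit. Your caveat that each formula needs $s\ge q$ is also right and worth keeping, since the paper leaves it implicit in the standing convention $0\le q\le s$. For instance, $T_2+3=T_3$ shows the formula fails without it: it would give $\Delta_6=4$, whereas in fact $\Delta_6=6$.
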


\begin{proof}
Immediate from Theorem~\ref{thm:exact-max-degree}.
\end{proof}

\begin{proposition}[The first window above a triangular number]\label{prop:q1}
For every $s\ge 1$, the degree-maximizing partitions of $T_s+1$ are exactly
\[
(s+1,s-1,s-2,\dots,1)
\qquad\text{and}\qquad
(s,s-1,\dots,2,1^2).
\]
In particular,
\[
|M_{T_s+1}|=2.
\]
\end{proposition}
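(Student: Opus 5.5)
The plan is to combine the value formula with the excess identity. By Corollary~\ref{cor:first-small-q-values}, $\Delta_{T_s+1}=s(s-1)+1$. For $s\ge 1$ we have $q=1\le s$, so Theorem~\ref{thm:support-maximal} applies and every maximizer $\lambda$ has $\supp(\lambda)=s$. By Theorem~\ref{thm:fixed-support-degree}, such a $\lambda$ is a maximizer exactly when $A(\lambda)+B(\lambda)=1$.

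Next I apply Lemma~\ref{lem:excess-identity} with $r=s$:
\[
1=q=\sum_{i=1}^s i(g_i-1)+\sum_{i=1}^s (m_i-1)L_i .
\]
Every summand is a nonnegative integer, so exactly one summand equals $1$ and all others vanish. This gives two cases.

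In the first case, the gap term is nonzero. Then $i(g_i-1)=1$ forces $i=1$ and $g_1=2$, with all other $g_j=1$ and all $m_j=1$. Reconstructing $L_i=\sum_{j\ge i}g_j$ gives $\lambda=(s+1,s-1,\dots,1)$.

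In the second case, the multiplicity term is nonzero. Then $(m_i-1)L_i=1$ forces $m_i=2$ and $L_i=1$. All gaps equal $1$, so $L_i=s-i+1$, and $L_i=1$ means $i=s$. This gives $\lambda=(s,s-1,\dots,2,1^2)$.

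Conversely, each of these two partitions has support $s$ and exactly one active bonus. So each has degree $s(s-1)+1=\Delta_{T_s+1}$. They are distinct, since one contains the part $s+1$ and the other does not, which gives $|M_{T_s+1}|=2$.

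The main point to check is the edge case $s=1$, where $n=2$. There the gap condition $i=1$ coincides with $i=s$. The formulas give $(2)$ and $(1^2)$, both of degree $1=\Delta_2$. This agrees with Proposition~\ref{prop:min-degree}, since $G_2$ is a single edge. The same forcing argument above still applies verbatim, because the support-maximality and excess-identity steps need only $1\le q\le s$, so I expect no real obstacle.
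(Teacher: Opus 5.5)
Your proposal is correct and follows the paper's proof essentially step for step: support-maximality, then the excess identity with total excess $1$ forcing a single unit-cost bonus (either $g_1=2$, or $m_s=2$ at $L_s=1$), and the converse via $A+B=1$. Your extra details (why $i(g_i-1)=1$ forces $i=1$, why $L_i=1$ forces $i=s$, distinctness, and the $s=1$ check) only make explicit what the paper leaves implicit.
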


\begin{proof}
By Corollary~\ref{cor:first-small-q-values}, $\Delta_{T_s+1}=s(s-1)+1$. Both displayed partitions have support size $s$ and satisfy $A+B=1$, hence are maximizers.

Conversely, if $\lambda\vdash T_s+1$ is maximizing, then $\supp(\lambda)=s$ and
\[
1=\sum_{i=1}^s i(g_i-1)+\sum_{i=1}^s (m_i-1)L_i.
\]
So exactly one unit of excess is present. Either it is a gap bonus at $i=1$, yielding
\[
(s+1,s-1,s-2,\dots,1),
\]
or it is a multiplicity bonus at the part $1$, yielding
\[
(s,s-1,\dots,2,1^2).
\]
\end{proof}

\begin{proposition}[The second window above a triangular number]\label{prop:q2}
For every $s\ge 2$, the unique degree-maximizing partition of $T_s+2$ is
\[
(s+1,s-1,s-2,\dots,2,1^2).
\]
In particular,
\[
|M_{T_s+2}|=1.
\]
\end{proposition}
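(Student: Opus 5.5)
We argue exactly as in Proposition~\ref{prop:q1}. By Corollary~\ref{cor:first-small-q-values}, $\Delta_{T_s+2}=s(s-1)+2$. The displayed partition $\lambda=(s+1,s-1,\dots,2,1^2)$ has support size $s$ (for $s\ge 2$ the parts $s+1$ and $1$ are distinct, and the support is $\{s+1,s-1,\dots,1\}$). It has one active gap bonus, $g_1=2$, and one active multiplicity bonus, $m_s=2$. Hence $A+B=2$, and Theorem~\ref{thm:fixed-support-degree} shows that it is a maximizer. Its size is $T_s+1+1=T_s+2$.

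For the converse, let $\lambda\vdash T_s+2$ be maximizing. By Theorem~\ref{thm:support-maximal}, $\supp(\lambda)=s$, so $A(\lambda)+B(\lambda)=2$. By Lemma~\ref{lem:separate-triangular-bounds}, $T_a+T_b\le 2$. This rules out $(a,b)=(2,0)$ and $(0,2)$, since $T_2=3>2$. Therefore $a=b=1$.

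Now apply Lemma~\ref{lem:excess-identity}:
\[
2=\sum_{i=1}^s i(g_i-1)+\sum_{i=1}^s (m_i-1)L_i .
\]
There is one index $i$ with $g_i>1$, contributing at least $i$, and one index $j$ with $m_j>1$, contributing at least $L_j\ge 1$. Since these two contributions sum to at most $2$, each equals exactly $1$. This forces $i=1$ and $g_1=2$, and it forces $L_j=1$ with $m_j=2$. The condition $L_j=1$ means $j=s$, because the smallest part is $L_s=g_s$, and $g_s=1$ since only $g_1$ exceeds $1$ and $s\ge2$.

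All other gaps and multiplicities equal $1$, so the part sizes are $L_1=s+1$ and $L_i=s-i+1$ for $i\ge2$, with $1$ repeated twice. This is exactly the displayed partition, so $|M_{T_s+2}|=1$.

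The only delicate point is the edge case $s=2$. There we must check that $g_1>1$ and $j=s$ do not conflict, since $i=1\ne s$. For $s=2$ the partition is $(3,1^2)\vdash 5=T_2+2$, which is consistent.
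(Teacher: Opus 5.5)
Your proof is correct and follows the paper's argument: support-maximality gives $A+B=2$, and the excess identity then forces each active bonus to consume exactly one unit of excess, which pins down $g_1=2$ and $m_s=2$. Your preliminary use of Lemma~\ref{lem:separate-triangular-bounds} to fix the profile as $(1,1)$ spells out a step the paper leaves implicit (two gap bonuses or two multiplicity bonuses cannot both consume only one unit).
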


\begin{proof}
By Corollary~\ref{cor:first-small-q-values},
\[
\Delta_{T_s+2}=s(s-1)+2.
\]
Now consider
\[
\lambda=(s+1,s-1,s-2,\dots,2,1^2).
\]
This partition has support size $s$ and satisfies $A(\lambda)=B(\lambda)=1$, hence
\[
\deg(\lambda)=s(s-1)+2=\Delta_{T_s+2}.
\]

Conversely, let $\mu\vdash T_s+2$ be any maximizer. By Theorem~\ref{thm:support-maximal}, $\supp(\mu)=s$. Since
\[
\deg(\mu)=\Delta_{T_s+2}=s(s-1)+2,
\]
and
\[
\deg(\mu)=s(s-1)+A(\mu)+B(\mu)
\]
by Theorem~\ref{thm:fixed-support-degree}, it follows that
\[
A(\mu)+B(\mu)=2.
\]
At the same time, Lemma~\ref{lem:excess-identity} gives
\[
2=\sum_{i=1}^s i(g_i-1)+\sum_{i=1}^s (m_i-1)L_i.
\]
Each active gap bonus consumes at least its index, and each active multiplicity bonus consumes at least the corresponding part size. Since the total excess is $2$ and the total bonus is also $2$, every active bonus must consume exactly one unit of excess. This forces $g_1=2$ and $m_s=2$, with all other data in the staircase state. Hence
\[
\mu=(s+1,s-1,s-2,\dots,2,1^2).
\]
\end{proof}

\begin{remark}[The first non-rigid cases]\label{rem:q345}
For $q=3,4,5$, Corollary~\ref{cor:first-small-q-values} gives
\[
\Delta_{T_s+3}=s(s-1)+2,
\qquad
\Delta_{T_s+4}=s(s-1)+3,
\qquad
\Delta_{T_s+5}=s(s-1)+3.
\]
At the same time, Lemma~\ref{lem:separate-triangular-bounds} restricts the possible maximizing profile types:
\[
\{(2,0),(1,1),(0,2)\}\ \text{for }q=3,
\qquad
\{(2,1),(1,2)\}\ \text{for }q=4,5.
\]
Thus $q=3,4,5$ still belong to a tightly constrained regime, but no longer to a rigid one.
\end{remark}

\begin{remark}[The first structurally richer threshold]\label{rem:q6-threshold}
The value $q=6$ is the first excess for which four bonuses can occur, since
\[
T_2+T_2=6.
\]
Equivalently, $q=6$ is the first value for which the profile
\[
(A,B)=(2,2)
\]
becomes admissible. This makes $q=6$ the natural boundary between the rigid initial regime and the first genuinely richer optimization regime inside the support-maximal layer.
\end{remark}

\section{Realization of maximizing bonus profiles}\label{sec:profiles}

For
\[
n=T_s+q,
\qquad
0\le q\le s,
\]
define
\[
M_n:=\{\lambda\vdash n:\deg(\lambda)=\Delta_n\}.
\]

\begin{definition}\label{def:maximizer-profile}
Let $\lambda\in M_n$. The \emph{bonus profile} of $\lambda$ is the pair
\[
\pi(\lambda):=(A(\lambda),B(\lambda)).
\]
We write
\[
\Pi_n:=\{\pi(\lambda):\lambda\in M_n\}
\]
for the set of realized maximizing profiles.
\end{definition}

\begin{proposition}[Profile constraint for maximizers]\label{prop:profile-constraint}
Let
\[
n=T_s+q,
\qquad
0\le q\le s.
\]
If $\lambda\in M_n$, then
\[
A(\lambda)+B(\lambda)=\rho(q),
\qquad
T_{A(\lambda)}+T_{B(\lambda)}\le q.
\]
Equivalently,
\[
\Pi_n\subseteq
\mathcal P(q):=
\{(a,b)\in \mathbb Z_{\ge0}^2:\ a+b=\rho(q),\ T_a+T_b\le q\}.
\]
\end{proposition}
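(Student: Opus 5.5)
The plan is to combine the exact value formula with the separate triangular bounds, splitting off the degenerate case $q=0$. Take $\lambda\in M_n$ with $n=T_s+q$ and $0\le q\le s$.

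\emph{Step 1: support.} First I establish $\supp(\lambda)=s$. If $q\ge 1$, this is exactly Theorem~\ref{thm:support-maximal}. If $q=0$, Proposition~\ref{prop:triangular} identifies $\lambda=\delta_s$, which has support size $s$. It also gives $A(\lambda)=B(\lambda)=0$, consistent with $\rho(0)=0$ and $T_0+T_0=0$.

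\emph{Step 2: the sum $A+B$.} With $\supp(\lambda)=s$, Theorem~\ref{thm:fixed-support-degree} gives
\[
\deg(\lambda)=s(s-1)+A(\lambda)+B(\lambda).
\]
Since $\lambda$ is a maximizer, $\deg(\lambda)=\Delta_n$. Theorem~\ref{thm:exact-max-degree} gives $\Delta_n=s(s-1)+\rho(q)$. Comparing the two expressions yields $A(\lambda)+B(\lambda)=\rho(q)$.

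\emph{Step 3: the triangular budget.} Because $\lambda$ has support size exactly $s$, Lemma~\ref{lem:separate-triangular-bounds} applies directly. It gives $T_{A(\lambda)}+T_{B(\lambda)}\le q$.

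\emph{Step 4: conclusion.} Steps 2 and 3 together say $\pi(\lambda)\in\mathcal P(q)$. Since $\lambda\in M_n$ was arbitrary, $\Pi_n\subseteq\mathcal P(q)$, which is the stated equivalent form.

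No step is a genuine obstacle, since every ingredient is already proved. The only point needing care is that Theorem~\ref{thm:support-maximal} is stated for $q\ge1$, so the case $q=0$ must go through Proposition~\ref{prop:triangular} as in Step 1. One should also note that the set in Definition~\ref{def:rho}, over which $\rho(q)$ is a maximum, consists of the same pairs that appear in $\mathcal P(q)$. Thus the proposition says precisely that maximizers achieve equality in the maximum defining $\rho(q)$.
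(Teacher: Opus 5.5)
Your proof is correct and follows the paper's argument. It obtains support-maximality, compares the fixed-support degree formula with Theorem~\ref{thm:exact-max-degree} to get $A+B=\rho(q)$, and invokes Lemma~\ref{lem:separate-triangular-bounds} for the budget. Handling $q=0$ separately via Proposition~\ref{prop:triangular} is a small improvement, since the paper's proof cites Theorem~\ref{thm:support-maximal}, which is stated only for $q\ge 1$.
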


\begin{proof}
Since $\lambda\in M_n$, Theorem~\ref{thm:support-maximal} gives $\supp(\lambda)=s$, and hence
\[
\deg(\lambda)=s(s-1)+A(\lambda)+B(\lambda).
\]
Comparing with Theorem~\ref{thm:exact-max-degree}, we obtain
\[
A(\lambda)+B(\lambda)=\rho(q).
\]
The triangular budget inequality follows from Lemma~\ref{lem:separate-triangular-bounds}.
\end{proof}

\begin{theorem}[Realization of all admissible maximizing profiles]\label{thm:all-profiles-realized}
Let
\[
n=T_s+q,
\qquad
0\le q\le s.
\]
Then
\[
\Pi_n=\mathcal P(q).
\]
\end{theorem}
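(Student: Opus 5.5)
The inclusion $\Pi_n\subseteq\mathcal P(q)$ is Proposition~\ref{prop:profile-constraint}. So the plan is to prove the reverse inclusion: for every $(a,b)\in\mathcal P(q)$, construct a partition $\lambda\vdash n$ with $\supp(\lambda)=s$, $A(\lambda)=a$ and $B(\lambda)=b$. By Theorem~\ref{thm:fixed-support-degree} such a $\lambda$ has degree $s(s-1)+a+b=s(s-1)+\rho(q)=\Delta_n$, using Theorem~\ref{thm:exact-max-degree}. Hence $\lambda\in M_n$ and $\pi(\lambda)=(a,b)$.

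The construction copies the proof of Theorem~\ref{thm:exact-max-degree}, now with an arbitrary admissible pair in place of the balanced one. Put $\ell:=q-T_a-T_b\ge 0$. Prescribe the data as follows:
\begin{itemize}[leftmargin=2em]
\item set $g_1=2+\ell$ if $a>0$, and $g_2=\cdots=g_a=2$;
\item set all remaining gaps equal to $1$;
\item set $m_{s-b+1}=\cdots=m_s=2$, and all other multiplicities equal to $1$.
\end{itemize}
The case $q=0$ is trivial, since then $(a,b)=(0,0)$ and $\lambda=\delta_0$ or $\delta_s$. Assume $q>0$, so that $\rho(q)\ge 1$.

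\textbf{Case $a=0$.} Then $b\ge1$, and the leftover $\ell$ has no gap to absorb it. Rather than using a gap, I would put the leftover into the largest repeated part: let part $b$ have multiplicity $2$ and choose the repeated sizes so that they sum to $T_b+\ell$. Doing this while keeping all gaps equal to $1$ is impossible, because with unit gaps the part sizes are frozen at $s,\dots,1$. So I would instead raise the multiplicity of part $1$: take $m_s=2+\ell$, which adds $(m_s-1)\cdot 1=1+\ell$ instead of $1$. This keeps $\beta_s=1$ and changes nothing else. Symmetrically, if $a>0$ the leftover goes into $g_1$, exactly as in the paper.

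\textbf{Verification.} Three checks are needed.
\begin{itemize}[leftmargin=2em]
\item \emph{Separation of the two blocks.} The modified gap indices $1,\dots,a$ must lie strictly left of the doubled indices $s-b+1,\dots,s$, so that the doubled parts still have the staircase sizes $b,\dots,1$. Since $a+b=\rho(q)\le q\le s$, we have $a<s-b+1$.
\item \emph{Excess count.} By Lemma~\ref{lem:excess-identity}, the gap block contributes $\sum_{i\le a} i+\ell=T_a+\ell$. The multiplicity block contributes $\sum_{j=1}^{b}j=T_b$, with the extra $\ell$ in the case $a=0$. Hence $|\lambda|=T_s+q=n$.
\item \emph{Support and bonuses.} All gaps are at least $1$, so the $L_i$ are distinct positive integers and $\supp(\lambda)=s$. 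Exactly $a$ gaps exceed $1$ and exactly $b$ multiplicities exceed $1$.
\end{itemize}

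The main obstacle is the bookkeeping of the leftover $\ell$ and of the degenerate cases. The leftover must be absorbed without creating an extra bonus or shifting the tail part sizes. The degenerate cases are $a=0$ or $b=0$, and small $s$ where the gap block and the multiplicity block could touch. The inequality $\rho(q)\le q\le s$ handles the separation. Placing $\ell$ inside an already active coordinate ($g_1$ or $m_s$) handles the absorption, since increasing an active coordinate does not change its indicator.
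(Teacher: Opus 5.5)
Your proof is correct and uses essentially the same construction as the paper: the slack $\ell$ is absorbed into $g_1$, the doubled multiplicities sit in the staircase tail, and the two blocks are kept apart by $a+b=\rho(q)\le q\le s$. Your extra device of taking $m_s=2+\ell$ when $a=0$ is valid but never actually needed. If $(0,b)\in\mathcal P(q)$, then $T_b\le q<\lfloor (b+2)^2/4\rfloor$, which forces $(q,b)\in\{(0,0),(1,1),(3,2)\}$ and hence $\ell=0$. That is why the paper can simply set $g_1=1$ in that case.
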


\begin{proof}
The inclusion $\Pi_n\subseteq \mathcal P(q)$ is Proposition~\ref{prop:profile-constraint}.

Conversely, let $(a,b)\in\mathcal P(q)$, and set
\[
\ell:=q-T_a-T_b\ge0.
\]
Because $(a,b)\in\mathcal P(q)$, we have $a+b=\rho(q)\le q\le s$, hence $s-b+1>a$. Therefore the multiplicity bonuses occur in the staircase tail and contribute exactly $T_b$. Define a support-$s$ partition by
\[
g_1=
\begin{cases}
2+\ell, & a>0,\\
1, & a=0,
\end{cases}
\qquad
g_2=\cdots=g_a=2,
\qquad
g_{a+1}=\cdots=g_s=1,
\]
and
\[
m_1=\cdots=m_{s-b}=1,
\qquad
m_{s-b+1}=\cdots=m_s=2.
\]
Then $A=a$, $B=b$, the total excess is $q$, and the degree is
\[
s(s-1)+a+b=s(s-1)+\rho(q)=\Delta_n.
\]
Hence this partition lies in $M_n$ and realizes the profile $(a,b)$.
\end{proof}

Thus the profile-level part of the maximizer problem is completely explicit: both the admissible boundary and the realized profile set are known exactly.

\begin{corollary}[Number of realized maximizing profiles]\label{cor:number-of-profiles}
Let
\[
n=T_s+q,
\qquad
0\le q\le s.
\]
Then
\[
|\Pi_n|=|\mathcal P(q)|.
\]
\end{corollary}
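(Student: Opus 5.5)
This corollary is an immediate consequence of Theorem~\ref{thm:all-profiles-realized}. The plan is simply to invoke the set equality $\Pi_n=\mathcal P(q)$ established there and take cardinalities of both sides. Both sets are finite: $\mathcal P(q)$ consists of lattice points $(a,b)\in\mathbb Z_{\ge0}^2$ on the segment $a+b=\rho(q)$, so it has at most $\rho(q)+1$ elements. Hence equality of the sets gives equality of their sizes.

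If one wants to be fully explicit, one can recall the two inclusions from the proof of that theorem. The inclusion $\Pi_n\subseteq\mathcal P(q)$ comes from Proposition~\ref{prop:profile-constraint}, which combines Theorem~\ref{thm:support-maximal}, Theorem~\ref{thm:exact-max-degree} and Lemma~\ref{lem:separate-triangular-bounds}. The reverse inclusion comes from the explicit gap/multiplicity construction, which realizes each admissible $(a,b)$ by a support-$s$ maximizer. The construction uses $a+b=\rho(q)\le q\le s$, so that the multiplicity bonuses sit in the staircase tail.

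There is no real obstacle here; the only content is the theorem already proved. Optionally, one could add a remark making $|\mathcal P(q)|$ explicit: it counts the $a\in\{0,\dots,\rho(q)\}$ with $T_a+T_{\rho(q)-a}\le q$. Since $T_a+T_{c-a}$ is convex in $a$ and symmetric about $c/2$, these $a$ form an interval centred at $\rho(q)/2$. That explicit count is not needed for the stated equality, however.
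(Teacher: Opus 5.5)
Your proposal is correct and matches the paper's proof, which likewise obtains the corollary by taking cardinalities in the set equality $\Pi_n=\mathcal P(q)$ of Theorem~\ref{thm:all-profiles-realized}. Your recollection of the two inclusions and your optional convexity remark on $|\mathcal P(q)|$ are accurate, but they are not needed for the stated equality.
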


\begin{proof}
Immediate from Theorem~\ref{thm:all-profiles-realized}.
\end{proof}

\section{Fibre-level structure: canonical representatives, slack families, and conjugation symmetry}\label{sec:fibres}

\begin{definition}\label{def:profile-fibres}
For $n=T_s+q$ and $(a,b)\in\Pi_n$, define
\[
M_n^{(a,b)}
:=
\{\lambda\in M_n:\ (A(\lambda),B(\lambda))=(a,b)\}.
\]
Then
\[
M_n=\bigsqcup_{(a,b)\in\Pi_n} M_n^{(a,b)},
\qquad
|M_n|=\sum_{(a,b)\in\Pi_n}|M_n^{(a,b)}|.
\]
\end{definition}

Theorem~\ref{thm:all-profiles-realized} resolves the profile-level problem, but does not determine the internal geometry of the fibres. We therefore begin with canonical constructions that occur uniformly across the realized fibres.

\begin{definition}[Canonical profile representative]\label{def:canonical-profile-representative}
Let
\[
n=T_s+q,
\qquad
0\le q\le s,
\]
and let $(a,b)\in\Pi_n$. Set
\[
\ell:=q-T_a-T_b\ge 0.
\]
We define the \emph{canonical representative} $\lambda^{\mathrm{can}}_{s,q}(a,b)$ to be the support-$s$ partition whose compressed data are given by
\[
g_1=
\begin{cases}
2+\ell, & a>0,\\
1, & a=0,
\end{cases}
\qquad
g_2=\cdots=g_a=2,
\qquad
g_{a+1}=\cdots=g_s=1,
\]
and
\[
m_1=\cdots=m_{s-b}=1,
\qquad
m_{s-b+1}=\cdots=m_s=2
\]
if $b>0$, while for $b=0$ all multiplicities are equal to $1$.
\end{definition}

\begin{proposition}[Canonical representatives lie in the correct fibre]\label{prop:canonical-representative}
For every
\[
(a,b)\in\Pi_n,
\]
the partition $\lambda^{\mathrm{can}}_{s,q}(a,b)$ belongs to $M_n^{(a,b)}$. In particular, every fibre $M_n^{(a,b)}$ is nonempty.
\end{proposition}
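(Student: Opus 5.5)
The plan is to verify directly that the compressed data in Definition~\ref{def:canonical-profile-representative} define a genuine partition of $n$ with support size $s$ and bonus profile $(a,b)$. Then Theorem~\ref{thm:fixed-support-degree} together with Theorem~\ref{thm:exact-max-degree} shows that its degree is $\Delta_n$. This is essentially the construction already used in the proof of Theorem~\ref{thm:all-profiles-realized}, so the argument amounts to isolating and checking its ingredients carefully.

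First, well-definedness. All prescribed gaps satisfy $g_i\ge1$, so $L_i=\sum_{j\ge i}g_j$ is strictly decreasing with $L_s=g_s\ge1$. The multiplicities are positive, so the data define a partition with exactly $s$ distinct part sizes.

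Second, the profile.
\begin{itemize}
\item For $a\ge1$, exactly $g_1,\dots,g_a$ exceed $1$, since $g_1=2+\ell\ge2$; hence $A=a$. For $a=0$ every gap equals $1$ and $A=0$.
\item Exactly the last $b$ multiplicities equal $2$, so $B=b$.
\end{itemize}

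Third, the size. I would use Lemma~\ref{lem:excess-identity}. The gap sum contributes
\[
\sum_{i\le a} i(g_i-1)=(1+\ell)+\sum_{i=2}^a i=T_a+\ell
\]
when $a>0$. The multiplicity sum is $\sum_{i=s-b+1}^{s}L_i$.

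The one point needing care, and the main obstacle, is to show that these tail part sizes are the staircase values $b,b-1,\dots,1$, giving $T_b$. This requires every index $i\ge s-b+1$ to satisfy $i>a$, so that $g_i=\dots=g_s=1$ and $L_i=s-i+1$. That amounts to $a+b\le s$, which I would get from $a+b=\rho(q)\le q\le s$. The inequality $\rho(q)\le q$ follows from $T_a+T_b\ge a+b$ together with Definition~\ref{def:rho}.

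One edge case remains. If $a=0$, the slack $\ell$ has nowhere to go in the gap data. We then need $\ell=0$, i.e.\ $T_b=q$ with $b=\rho(q)$. I would argue this from maximality of $\rho$: if $a=0$, $b\ge1$ and $T_b<q$, then the pair $(1,b)$ satisfies $T_1+T_b\le q$, contradicting $a+b=\rho(q)$. If $b=0$ as well, then $\rho(q)=0$, forcing $q=0$.

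With $\ell$ accounted for, the excess is $T_a+\ell+T_b=q$, so the partition has size $T_s+q=n$. Finally, Theorem~\ref{thm:fixed-support-degree} gives $\deg=s(s-1)+a+b=s(s-1)+\rho(q)=\Delta_n$ by Theorem~\ref{thm:exact-max-degree}. So $\lambda^{\mathrm{can}}_{s,q}(a,b)\in M_n^{(a,b)}$, and in particular every fibre $M_n^{(a,b)}$ is nonempty.
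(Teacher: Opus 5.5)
Your proof is correct and follows the same direct verification as the paper: well-definedness, $A=a$ and $B=b$, the size count relative to $\delta_s$ using $a+b=\rho(q)\le q\le s$, and then Theorems~\ref{thm:fixed-support-degree} and~\ref{thm:exact-max-degree}. You are slightly more careful than the paper in two places. You justify $\rho(q)\le q$ via $T_a+T_b\ge a+b$, and you show by maximality of $\rho$ that $\ell=0$ whenever $a=0$; the paper's count ``$\ell+T_a$'' tacitly needs this, since the definition sets $g_1=1$ when $a=0$.
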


\begin{proof}
By definition, the compressed data of $\lambda^{\mathrm{can}}_{s,q}(a,b)$ are given by
\[
g_1=
\begin{cases}
2+\ell, & a>0,\\
1, & a=0,
\end{cases}
\qquad
g_2=\cdots=g_a=2,
\qquad
g_{a+1}=\cdots=g_s=1,
\]
and
\[
m_1=\cdots=m_{s-b}=1,
\qquad
m_{s-b+1}=\cdots=m_s=2,
\]
where
\[
\ell=q-T_a-T_b\ge0.
\]
Since $(a,b)\in\Pi_n$, we have $a+b=\rho(q)\le q\le s$, hence $s-b+1>a$. Thus the multiplicity bonuses lie in the staircase tail, where the corresponding part sizes remain $b,b-1,\dots,1$.

These data define a partition of support size $s$: the gaps determine a strictly decreasing sequence of positive part sizes, and the multiplicities determine how many times each of these part sizes occurs.

Exactly the first $a$ gaps exceed $1$, hence
\[
A\bigl(\lambda^{\mathrm{can}}_{s,q}(a,b)\bigr)=a.
\]
Likewise, exactly the last $b$ multiplicities exceed $1$, hence
\[
B\bigl(\lambda^{\mathrm{can}}_{s,q}(a,b)\bigr)=b.
\]

It remains to check the size. Relative to the staircase partition $\delta_s$, the modified gaps contribute
\[
\ell+(1+2+\cdots+a)=\ell+T_a,
\]
while the modified multiplicities contribute
\[
1+2+\cdots+b=T_b.
\]
Therefore
\[
\bigl|\lambda^{\mathrm{can}}_{s,q}(a,b)\bigr|
=
T_s+\ell+T_a+T_b
=
T_s+q
=
n.
\]

Finally, since $(a,b)\in\Pi_n$, we have
\[
a+b=\rho(q).
\]
Thus, by Theorem~\ref{thm:fixed-support-degree},
\[
\deg\bigl(\lambda^{\mathrm{can}}_{s,q}(a,b)\bigr)
=
s(s-1)+a+b
=
s(s-1)+\rho(q)
=
\Delta_n
\]
by Theorem~\ref{thm:exact-max-degree}. Hence
\[
\lambda^{\mathrm{can}}_{s,q}(a,b)\in M_n^{(a,b)}.
\]
\end{proof}

\begin{proposition}[An explicit slack family inside each mixed fibre]\label{prop:slack-family}
Let
\[
n=T_s+q,
\qquad
0\le q\le s,
\]
and let $(a,b)\in\Pi_n$ with
\[
a>0,
\qquad
b>0.
\]
Set
\[
\ell:=q-T_a-T_b\ge 0.
\]
Then for each integer $t$ with
\[
0\le t\le \ell,
\]
there exists a partition $\lambda_t\in M_n^{(a,b)}$ such that the partitions $\lambda_0,\dots,\lambda_\ell$ are pairwise distinct. In particular,
\[
|M_n^{(a,b)}|\ge \ell+1=q-T_a-T_b+1.
\]
\end{proposition}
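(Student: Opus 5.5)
The plan is to interpolate between the two extreme ways of absorbing the slack $\ell$. The canonical representative $\lambda^{\mathrm{can}}_{s,q}(a,b)$ puts all of $\ell$ into the first gap, as $g_1=2+\ell$. Alternatively, the slack could go into the multiplicity of the smallest part $L_s=1$, where each extra copy costs exactly one unit of excess. For $0\le t\le \ell$, I would define $\lambda_t$ by the compressed data
\[
g_1=2+t,\quad g_2=\cdots=g_a=2,\quad g_{a+1}=\cdots=g_s=1,
\]
\[
m_1=\cdots=m_{s-b}=1,\quad m_{s-b+1}=\cdots=m_{s-1}=2,\quad m_s=2+(\ell-t).
\]
Thus $t=\ell$ recovers the canonical representative.

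The verification runs in the following order.

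\emph{Well-definedness.} The gaps are positive, so the $L_i=\sum_{j\ge i}g_j$ are strictly decreasing and positive, and the support size is $s$.

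\emph{Tail values.} As in Proposition~\ref{prop:canonical-representative}, $a+b=\rho(q)\le q\le s$ gives $s-b+1>a$. In particular $a<s$, so $g_s=1$. The last $b$ part sizes are therefore the staircase values $b,b-1,\dots,1$, and $L_s=1$.

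\emph{Profile.} The gaps exceeding $1$ are exactly $g_1,\dots,g_a$ (using $a>0$), so $A(\lambda_t)=a$. The multiplicities exceeding $1$ are exactly $m_{s-b+1},\dots,m_s$ (using $b>0$, so that index $s$ is among them), so $B(\lambda_t)=b$.

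\emph{Size.} I would apply Lemma~\ref{lem:excess-identity}. The gap contribution is $1\cdot(1+t)+\sum_{i=2}^a i=T_a+t$. The multiplicity contribution is $\sum_{j=2}^{b}j+(1+\ell-t)\cdot 1=T_b+\ell-t$. The total excess is therefore $T_a+T_b+\ell=q$, so $\lambda_t\vdash n$.

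\emph{Degree.} By Theorem~\ref{thm:fixed-support-degree} and Theorem~\ref{thm:exact-max-degree}, $\deg(\lambda_t)=s(s-1)+a+b=s(s-1)+\rho(q)=\Delta_n$. Hence $\lambda_t\in M_n^{(a,b)}$.

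\emph{Distinctness.} The part $1$ occurs in $\lambda_t$ with multiplicity exactly $2+\ell-t$, and this is injective in $t$. So the $\lambda_t$ are pairwise distinct, which gives $|M_n^{(a,b)}|\ge \ell+1$.

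The only delicate point is the one where the hypothesis $a,b>0$ is genuinely used. We need $b>0$ so that inflating $m_s$ does not create a new multiplicity bonus. We need $a>0$ so that inflating $g_1$ does not create a new gap bonus. We also need the tail-placement inequality $s-b+1>a$ to guarantee $L_s=1$, so that each extra copy of the smallest part costs exactly one unit. Once these are checked, everything else is routine bookkeeping identical to the canonical-representative computation.
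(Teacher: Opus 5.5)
Your proposal is correct and follows the paper's proof essentially verbatim. It uses the same family $g_1=2+t$, $m_s=2+(\ell-t)$, the same excess bookkeeping via Lemma~\ref{lem:excess-identity}, and the same degree computation through Theorems~\ref{thm:fixed-support-degree} and~\ref{thm:exact-max-degree}. The differences are cosmetic: you separate the $\lambda_t$ by the multiplicity of the part $1$ rather than by the top gap $g_1$, and you explicitly verify the tail-placement inequality $s-b+1>a$ (guaranteeing $L_s=1$), which the paper's proof of this proposition uses only implicitly.
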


\begin{proof}
Let
\[
\ell:=q-T_a-T_b\ge0.
\]
For each integer $t$ with
\[
0\le t\le \ell,
\]
define a partition $\lambda_t$ in compressed form by
\[
g_1=2+t,
\qquad
g_2=\cdots=g_a=2,
\qquad
g_{a+1}=\cdots=g_s=1,
\]
and
\[
m_1=\cdots=m_{s-b}=1,
\qquad
m_{s-b+1}=\cdots=m_{s-1}=2,
\qquad
m_s=2+(\ell-t).
\]
If $b=1$, the list $m_{s-b+1},\dots,m_{s-1}$ is empty, so only the last multiplicity $m_s$ is modified.

These data define a partition of support size $s$: the gaps again determine a strictly decreasing sequence of positive part sizes, and the multiplicities are positive integers.

Exactly the first $a$ gaps exceed $1$, so
\[
A(\lambda_t)=a.
\]
Likewise, exactly the last $b$ multiplicities exceed $1$, so
\[
B(\lambda_t)=b.
\]
Hence
\[
A(\lambda_t)+B(\lambda_t)=a+b=\rho(q).
\]

We now check the size. Relative to the staircase partition $\delta_s$, the modified gaps contribute
\[
t+(1+2+\cdots+a)=t+T_a,
\]
while the modified multiplicities contribute
\[
(\ell-t)+(1+2+\cdots+b)=\ell-t+T_b.
\]
Therefore
\[
|\lambda_t|
=
T_s+t+T_a+(\ell-t)+T_b
=
T_s+\ell+T_a+T_b
=
T_s+q
=
n.
\]
So $\lambda_t\vdash n$, and by Theorem~\ref{thm:fixed-support-degree},
\[
\deg(\lambda_t)=s(s-1)+a+b=s(s-1)+\rho(q)=\Delta_n.
\]
Thus
\[
\lambda_t\in M_n^{(a,b)}.
\]

Finally, the partitions $\lambda_t$ are pairwise distinct because the top gap $g_1=2+t$ depends on $t$. Hence
\[
|M_n^{(a,b)}|\ge \ell+1=q-T_a-T_b+1.
\]
\end{proof}

In particular, a mixed fibre is never rigid once positive slack is available.

\begin{proposition}[Conjugation swaps the profile coordinates]\label{prop:conjugation-swaps-profile}
Let
\[
\lambda=(L_1^{m_1},\dots,L_r^{m_r})
\]
be written in compressed form, and let
\[
S_i:=m_1+\cdots+m_i\qquad(1\le i\le r).
\]
Then the conjugate partition $\lambda'$ has compressed form
\[
\lambda'=(S_r^{g_r},\,S_{r-1}^{g_{r-1}},\,\dots,\,S_1^{g_1}).
\]
Consequently,
\[
A(\lambda')=B(\lambda),
\qquad
B(\lambda')=A(\lambda).
\]
\end{proposition}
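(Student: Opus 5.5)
We compute the conjugate directly from the column lengths of the Ferrers diagram. For $k\ge 1$, the $k$-th part of $\lambda'$ equals $\#\{j:\lambda_j\ge k\}$. Because the parts of $\lambda$ take the values $L_1>\cdots>L_r$ with multiplicities $m_1,\dots,m_r$, this count is $S_i=m_1+\cdots+m_i$ exactly when $L_{i+1}<k\le L_i$, using the convention $L_{r+1}:=0$. The interval $(L_{i+1},L_i]$ contains $L_i-L_{i+1}=g_i$ integers; for $i=r$ this is $L_r=g_r$, consistent with the definition of $g_r$.

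Hence the value $S_i$ occurs in $\lambda'$ exactly $g_i$ times. Since $m_i\ge1$, we have $S_1<S_2<\cdots<S_r$, so these $r$ values are distinct. Listing them in decreasing order gives the compressed form
\[
\lambda'=(S_r^{g_r},\dots,S_1^{g_1}).
\]
In particular, $\supp(\lambda')=r$. Every $g_i\ge1$, so each listed value genuinely occurs.

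Next we read off the data of $\lambda'$, indexing its distinct parts by $j=1,\dots,r$ so that $L'_j=S_{r+1-j}$ and $m'_j=g_{r+1-j}$.
\begin{itemize}
\item For $j<r$, the gaps are $g'_j=S_{r+1-j}-S_{r-j}=m_{r+1-j}$.
\item The last gap is $g'_r=L'_r=S_1=m_1$.
\end{itemize}
Thus $g'_j=m_{r+1-j}$ for all $j$, including the boundary index, and $m'_j=g_{r+1-j}$.

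Summing indicators then gives
\[
A(\lambda')=\#\{j:m_{r+1-j}>1\}=B(\lambda),
\qquad
B(\lambda')=\#\{j:g_{r+1-j}>1\}=A(\lambda).
\]

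The only delicate point is the boundary bookkeeping. The last gap is defined as $g_r=L_r$ rather than as a difference, and the reindexing $j\leftrightarrow r+1-j$ must match it with $S_1=m_1$. Adopting the convention $L_{r+1}=0$ and $S_0=0$ handles this uniformly. Everything else is routine, and we do not need the degree formula of Theorem~\ref{thm:fixed-support-degree} at all.
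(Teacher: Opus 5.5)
Your proposal is correct and follows essentially the same route as the paper: read off the distinct column heights $S_1<\cdots<S_r$ with $g_i$ columns of height $S_i$, then identify the multiplicities and gaps of $\lambda'$ as the reversed sequences $g_r,\dots,g_1$ and $m_r,\dots,m_1$. Your explicit formula $\lambda'_k=\#\{j:\lambda_j\ge k\}$ and the convention $L_{r+1}=0$ simply make the boundary bookkeeping more explicit than the paper does.
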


\begin{proof}
In the Ferrers diagram of $\lambda$, the distinct row lengths are
\[
L_1>L_2>\cdots>L_r>0,
\]
with multiplicities $m_1,\dots,m_r$. Hence the distinct column heights are
\[
S_1<S_2<\cdots<S_r,
\qquad
S_i=m_1+\cdots+m_i.
\]
Moreover, the number of columns of height $S_i$ is exactly
\[
L_i-L_{i+1}=g_i
\qquad
(1\le i<r),
\]
and for the last height $S_r$ it is
\[
L_r=g_r.
\]
Therefore, when written in decreasing order, the conjugate partition has compressed form
\[
\lambda'=(S_r^{g_r},\,S_{r-1}^{g_{r-1}},\,\dots,\,S_1^{g_1}).
\]

It follows immediately that the multiplicities of $\lambda'$ are
\[
g_r,g_{r-1},\dots,g_1.
\]
Hence
\[
B(\lambda')
=
\sum_{i=1}^r \mathbf 1_{\{g_i>1\}}
=
A(\lambda).
\]

Likewise, the gaps of $\lambda'$ are
\[
S_r-S_{r-1}=m_r,\quad
S_{r-1}-S_{r-2}=m_{r-1},\quad \dots,\quad
S_1=m_1,
\]
so
\[
A(\lambda')
=
\sum_{i=1}^r \mathbf 1_{\{m_i>1\}}
=
B(\lambda).
\]
This proves the claim.
\end{proof}

\begin{corollary}[Conjugation symmetry of maximizer fibres]\label{cor:conjugation-fibres}
Let
\[
n=T_s+q,
\qquad
0\le q\le s.
\]
Then conjugation induces a bijection
\[
M_n^{(a,b)}\longrightarrow M_n^{(b,a)}.
\]
In particular,
\[
|M_n^{(a,b)}|=|M_n^{(b,a)}|
\]
for every realized profile $(a,b)\in\Pi_n$.
\end{corollary}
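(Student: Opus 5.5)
The plan is to combine three facts: conjugation is a degree-preserving involution of $G_n$, Proposition~\ref{prop:conjugation-swaps-profile} swaps the profile coordinates, and Theorem~\ref{thm:all-profiles-realized} shows that the profile set is symmetric.

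First, as recalled in Section~\ref{sec:preliminaries}, conjugation preserves adjacency in $G_n$ and is an involutive automorphism. Hence $\deg(\lambda')=\deg(\lambda)$ for every $\lambda\vdash n$, and so $\lambda\mapsto\lambda'$ maps $M_n$ bijectively onto itself. Its inverse is again conjugation.

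As a consistency check that avoids the graph-automorphism fact, we can also argue through the compressed form. Proposition~\ref{prop:conjugation-swaps-profile} shows that $\lambda'$ has the same support size $r$ as $\lambda$, and that $A(\lambda')+B(\lambda')=B(\lambda)+A(\lambda)$. Theorem~\ref{thm:fixed-support-degree} then gives $\deg(\lambda')=r(r-1)+A(\lambda)+B(\lambda)=\deg(\lambda)$ directly.

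Next, let $\lambda\in M_n^{(a,b)}$. By Proposition~\ref{prop:conjugation-swaps-profile}, $(A(\lambda'),B(\lambda'))=(b,a)$, so $\lambda'\in M_n^{(b,a)}$. The same argument applied to $(b,a)$ shows that conjugation sends $M_n^{(b,a)}$ into $M_n^{(a,b)}$. Since conjugation is an involution, the two restricted maps are mutually inverse, so each is a bijection. This holds even if one fibre were empty.

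For the statement to concern realized profiles, we need $(a,b)\in\Pi_n\iff(b,a)\in\Pi_n$. This follows from Theorem~\ref{thm:all-profiles-realized}, since the conditions $a+b=\rho(q)$ and $T_a+T_b\le q$ defining $\mathcal P(q)$ are symmetric in $a$ and $b$. Alternatively, it follows from the bijection itself together with the nonemptiness in Proposition~\ref{prop:canonical-representative}. The equality $|M_n^{(a,b)}|=|M_n^{(b,a)}|$ is then immediate.

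No step here is a real obstacle. The only point that needs care is the index bookkeeping in Proposition~\ref{prop:conjugation-swaps-profile}, which we take as given.
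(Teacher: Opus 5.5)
Your proposal is correct and follows the same route as the paper. Conjugation preserves $n$, the support size and the degree, so it maps $M_n$ to itself; Proposition~\ref{prop:conjugation-swaps-profile} swaps $A$ and $B$, so the involution restricts to mutually inverse maps $M_n^{(a,b)}\leftrightarrow M_n^{(b,a)}$. Your extra check that $\Pi_n$ is symmetric in $(a,b)$, which the paper leaves implicit, is also fine.
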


\begin{proof}
Conjugation preserves $n$, the degree, and the support size, and swaps $A$ and $B$ by Proposition~\ref{prop:conjugation-swaps-profile}.
\end{proof}

\begin{corollary}[Necessary profile condition for self-conjugate maximizers]\label{cor:selfconj-diagonal}
Let
\[
n=T_s+q,
\qquad
0\le q\le s.
\]
If a maximizer $\lambda\in M_n$ is self-conjugate, then
\[
A(\lambda)=B(\lambda).
\]
Equivalently, every self-conjugate maximizer lies in a diagonal fibre
\[
M_n^{(a,a)}.
\]
\end{corollary}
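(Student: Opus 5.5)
The plan is to derive the statement directly from Proposition~\ref{prop:conjugation-swaps-profile}, since a self-conjugate partition is fixed by conjugation.

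Let $\lambda\in M_n$ with $\lambda'=\lambda$. Proposition~\ref{prop:conjugation-swaps-profile} gives
\[
A(\lambda)=A(\lambda')=B(\lambda).
\]
Write $a:=A(\lambda)=B(\lambda)$. Then the profile of $\lambda$ is $\pi(\lambda)=(a,a)$. By Definition~\ref{def:profile-fibres}, this says exactly that $\lambda\in M_n^{(a,a)}$, which is a diagonal fibre.

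For the equivalence of the two formulations, note that every $\lambda\in M_n$ lies in the unique fibre $M_n^{(A(\lambda),B(\lambda))}$. That fibre is diagonal precisely when $A(\lambda)=B(\lambda)$.

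As a consistency check, Proposition~\ref{prop:profile-constraint} gives $2a=\rho(q)$. So a self-conjugate maximizer can exist only when $\rho(q)$ is even. This observation is not needed for the statement itself.

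There is no real obstacle here. The only point to keep in view is that the compressed-form description of $\lambda'$ in Proposition~\ref{prop:conjugation-swaps-profile} holds for arbitrary partitions, not only maximizers, so it can be applied to $\lambda$ without further hypotheses.
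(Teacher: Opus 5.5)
Your proposal is correct and is essentially the paper's own proof: self-conjugacy $\lambda=\lambda'$ together with Proposition~\ref{prop:conjugation-swaps-profile} gives $A(\lambda)=A(\lambda')=B(\lambda)$, so $\lambda$ lies in $M_n^{(a,a)}$. Your side remark that Proposition~\ref{prop:profile-constraint} then forces $2a=\rho(q)$, so $\rho(q)$ must be even, is also correct but not needed.
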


\begin{proof}
If $\lambda=\lambda'$, then Proposition~\ref{prop:conjugation-swaps-profile} gives $A(\lambda)=B(\lambda)$.
\end{proof}

\section{Initial fibre-level window classification and localization}\label{sec:computations}

At this point the exact value theory and the exact profile theory are already complete:
Theorem~\ref{thm:exact-max-degree} determines $\Delta_n$ exactly, and
Theorem~\ref{thm:all-profiles-realized} determines the realized maximizing profile set
$\Pi_n$ exactly. What remains is the internal geometry of the fibres
\[
M_n^{(a,b)}=\{\lambda\in M_n:(A(\lambda),B(\lambda))=(a,b)\}.
\]

For the first near-triangular windows, this fibre geometry can be determined exactly through the initial small-excess range. This extends the rigid cases $q=1,2$ from Section~\ref{sec:small-excess} and shows that the unresolved part of the degree theory begins strictly later than one might first expect. The next window already exhibits computationally richer behaviour.

For convenience, write
\[
m_n^{\mathrm{sc}}:=|\{\lambda\in M_n:\lambda=\lambda'\}|
\]
for the number of self-conjugate maximizers.

\subsection{Computational method}\label{subsec:computational-method}

All data in the computational part of this section were obtained by complete enumeration of the
partitions of $n$ for
\[
n\le 63.
\]
For each partition $\lambda\vdash n$, we computed its compressed data
\[
(L_1^{m_1},\dots,L_r^{m_r}),
\]
then evaluated
\[
A(\lambda),\qquad B(\lambda),\qquad \deg(\lambda)=r(r-1)+A(\lambda)+B(\lambda).
\]
The maximizer set
\[
M_n=\{\lambda\vdash n:\deg(\lambda)=\Delta_n\}
\]
was then determined by comparison with the theoretical value of $\Delta_n$ from
Theorem~\ref{thm:exact-max-degree}. For each maximizer, we recorded its profile
\[
(A(\lambda),B(\lambda)),
\]
its fibre membership $M_n^{(a,b)}$, and whether it is self-conjugate.

\subsection{Exact classification of the first near-triangular windows}\label{subsec:initial-windows}

The cases $q=1,2$ were settled in Section~\ref{sec:small-excess}. We now classify the next
five excesses $q=3,4,5,6,7$ exactly. We then record the first still-computational window $q=8$.

\begin{proposition}[Exact fibre classification for $q=3$]
For every $s\ge 3$, one has
\[
\Pi_{T_s+3}=\{(2,0),(1,1),(0,2)\},
\]
and
\[
|M^{(2,0)}_{T_s+3}|=1,\qquad
|M^{(1,1)}_{T_s+3}|=4,\qquad
|M^{(0,2)}_{T_s+3}|=1.
\]
Hence
\[
|M_{T_s+3}|=6,
\qquad
m^{\mathrm{sc}}_{T_s+3}=0.
\]
\end{proposition}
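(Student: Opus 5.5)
The profile set follows at once from Theorem~\ref{thm:all-profiles-realized}. We have $\rho(3)=\lfloor\sqrt{13}\rfloor-1=2$, and among the pairs with $a+b=2$ the values of $T_a+T_b$ are $3,2,3$, all at most $3$. So $\Pi_{T_s+3}=\{(2,0),(1,1),(0,2)\}$. The hypothesis $s\ge 3$ guarantees $q\le s$.

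For the fibre sizes, I would work directly with the excess identity of Lemma~\ref{lem:excess-identity} on the support-$s$ layer, to which every maximizer is confined by Theorem~\ref{thm:support-maximal}:
\[
3=\sum_{i=1}^s i(g_i-1)+\sum_{i=1}^s (m_i-1)L_i .
\]
Each active gap bonus at index $i$ consumes at least $i$ units of excess. Each active multiplicity bonus at part $L_j$ consumes at least $L_j$ units, and $L_j\ge s-j+1$ with equality only if all gaps below index $j$ equal $1$. I then enumerate the ways to distribute the three units.

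\emph{Profile $(2,0)$.} We need two indices $i<i'$ with $i+i'\le 3$, forcing $\{i,i'\}=\{1,2\}$ with $g_1=g_2=2$ and everything else staircase. This gives exactly one partition, $(s+2,s,s-3,\dots,1)$.

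\emph{Profile $(0,2)$.} Two distinct part sizes with multiplicity $>1$ and total consumption at most $3$ must be the parts $1$ and $2$, each doubled, with all gaps $1$. This is again one partition, $(s,\dots,3,2^2,1^2)$. Alternatively, it is the conjugate of the $(2,0)$ case by Corollary~\ref{cor:conjugation-fibres}.

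\emph{Profile $(1,1)$.} One gap bonus at index $i$ and one multiplicity bonus at a part $L_j$. The total consumption $i(g_i-1)+(m_j-1)L_j=3$ leaves a slack of $1$ beyond the minimal $1+1$. The cases are as follows:
\begin{itemize}
\item $i=1$, $g_1=3$, $m_s=2$ with $L_s=1$.
\item $i=1$, $g_1=2$, $m_s=3$.
\item $i=1$, $g_1=2$, $m_{s-1}=2$ with $L_{s-1}=2$.
\item $i=2$, $g_2=2$, $m_s=2$.
\end{itemize}
I also have to exclude the configurations where the gap bonus sits at the bottom ($i=s$) and changes the tail part sizes. For instance, $g_s=2$ makes $L_s=2$, so doubling it costs $2$, and the total is at least $s+2>3$ for $s\ge 2$. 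This yields exactly four partitions, consistent with the slack bound of Proposition~\ref{prop:slack-family}, $\ell+1=2$, plus two extra members. The set is also closed under conjugation, as a check.

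The main obstacle is the exhaustive bookkeeping in the $(1,1)$ case. The cost of a multiplicity bonus depends on the gaps beneath it, so gap and multiplicity placements interact. I would organize that case by the index $j$ of the repeated part: $j=s$ versus $j\le s-1$, where $j\le s-1$ forces $L_j\ge 2$ and hence $j=s-1$ with no gap bonus below it.

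Finally, $|M_{T_s+3}|=1+4+1=6$. By Corollary~\ref{cor:selfconj-diagonal}, self-conjugate maximizers can only lie in $M^{(1,1)}$. Computing conjugates via Proposition~\ref{prop:conjugation-swaps-profile} shows that the four $(1,1)$ partitions pair up as two conjugate pairs, each consisting of two distinct partitions. For example, $(s+2,s-1,\dots,1^2)$ and $(s+1,s-1,\dots,1^3)$ are conjugate, as are the remaining two. Hence $m^{\mathrm{sc}}=0$.
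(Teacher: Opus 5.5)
Your proposal is correct and follows essentially the same route as the paper. It uses the excess identity on the support-$s$ layer, forces $g_1=g_2=2$ for $(2,0)$, splits the $(1,1)$ case as $1+2$ or $2+1$ to reach the same four configurations, and uses conjugation both for $(0,2)$ and to pair the $(1,1)$ fibre into two conjugate pairs; one small slip is that the $(2,0)$ maximizer is $(s+2,s,s-2,s-3,\dots,1)$, not $(s+2,s,s-3,\dots,1)$, though your compressed data $g_1=g_2=2$ and the count are unaffected.
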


\begin{proof}
By Remark~\ref{rem:q345}, the only possible maximizing profiles are
\[
(2,0),\ (1,1),\ (0,2).
\]

For $(2,0)$, the total excess is
\[
3=\sum_{i=1}^s i(g_i-1),
\]
with exactly two active gaps. Since two distinct active gaps consume at least
\[
1+2=3,
\]
equality forces
\[
g_1=g_2=2,
\]
with all other gaps equal to $1$. Thus
\[
|M^{(2,0)}_{T_s+3}|=1.
\]

By Corollary~\ref{cor:conjugation-fibres},
\[
|M^{(0,2)}_{T_s+3}|=1.
\]

Now let $\lambda\in M^{(1,1)}_{T_s+3}$. Then
\[
3=i(g_i-1)+(m_j-1)L_j
\]
for some $i,j$. Both summands are positive integers, so the only possibilities are
\[
1+2
\qquad\text{or}\qquad
2+1.
\]

If the multiplicity contribution is $1$, then necessarily
\[
m_s=2,
\]
and the gap contribution is $2$, which gives either
\[
g_2=2
\qquad\text{or}\qquad
g_1=3.
\]

If the multiplicity contribution is $2$, then either
\[
m_{s-1}=2
\qquad\text{or}\qquad
m_s=3,
\]
and the gap contribution is then $1$, so necessarily
\[
g_1=2.
\]

Thus the four possibilities are
\[
(g_1,m_{s-1})=(2,2),\qquad
(g_2,m_s)=(2,2),\qquad
(g_1,m_s)=(3,2),\qquad
(g_1,m_s)=(2,3),
\]
all other compressed data being in the staircase state. Hence
\[
|M^{(1,1)}_{T_s+3}|=4,
\qquad
|M_{T_s+3}|=1+4+1=6.
\]

Finally, the fibre $M^{(1,1)}_{T_s+3}$ splits into two conjugate pairs, so none of its
elements is self-conjugate. Since self-conjugate maximizers must lie in diagonal fibres by
Corollary~\ref{cor:selfconj-diagonal}, this gives
\[
m^{\mathrm{sc}}_{T_s+3}=0.
\]
\end{proof}

\begin{proposition}[Exact fibre classification for $q=4$]
For every $s\ge 4$, one has
\[
\Pi_{T_s+4}=\{(2,1),(1,2)\},
\]
and
\[
|M^{(2,1)}_{T_s+4}|=|M^{(1,2)}_{T_s+4}|=1.
\]
Hence
\[
|M_{T_s+4}|=2,
\qquad
m^{\mathrm{sc}}_{T_s+4}=0.
\]
\end{proposition}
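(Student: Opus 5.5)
The plan is to follow the pattern of the $q=3$ classification. By Theorem~\ref{thm:exact-max-degree} and Proposition~\ref{prop:rho-explicit}, $\rho(4)=\lfloor\sqrt{17}\rfloor-1=3$. Proposition~\ref{prop:profile-constraint} then forces every maximizer to have $A+B=3$ and $T_A+T_B\le 4$. Among the pairs $(3,0),(2,1),(1,2),(0,3)$, only $(2,1)$ and $(1,2)$ satisfy the budget, since $T_3=6>4$ while $T_2+T_1=4$. Theorem~\ref{thm:all-profiles-realized} shows both profiles are realized, which gives $\Pi_{T_s+4}=\{(2,1),(1,2)\}$. The hypothesis $s\ge 4$ guarantees $q\le s$, so these results apply.

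For the fibre $(2,1)$, I will use the excess identity of Lemma~\ref{lem:excess-identity}:
\[
4=\sum_{i\in I} i(g_i-1)+\sum_{j\in J}(m_j-1)L_j,
\qquad |I|=2,\ |J|=1.
\]
As in the proof of Lemma~\ref{lem:separate-triangular-bounds}, the gap part is at least $\sum_{i\in I}i\ge 1+2=3$. The multiplicity part is at least $L_j\ge 1$. Since the total is exactly $4=3+1$, both inequalities must be equalities. This gives $I=\{1,2\}$ with $g_1=g_2=2$, and $(m_j-1)L_j=1$, hence $L_j=1$ and $m_j=2$. The part size $1$ must be $L_s$, so $j=s$; all other data are in the staircase state.

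One point needs care: with $I=\{1,2\}$ and $s\ge 4$, the part $L_s=g_s=1$ is unaffected by the top gaps, so $L_s=1$ is consistent. The fibre is therefore the single partition $\lambda^{\mathrm{can}}_{s,4}(2,1)$, and $|M^{(2,1)}_{T_s+4}|=1$. Corollary~\ref{cor:conjugation-fibres} then gives $|M^{(1,2)}_{T_s+4}|=1$ and hence $|M_{T_s+4}|=2$.

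For self-conjugacy, Corollary~\ref{cor:selfconj-diagonal} requires a self-conjugate maximizer to lie in a diagonal fibre $M^{(a,a)}$. Since $a+b=3$ is odd, there is no diagonal profile, so $m^{\mathrm{sc}}_{T_s+4}=0$.

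I expect no real obstacle here. The only delicate step is the equality analysis in the excess identity: it must use distinctness of indices for the gap bonuses, and the fact that a multiplicity contribution of exactly $1$ forces the part size to be $1$, which can only occur at position $s$.
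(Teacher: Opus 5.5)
Your proposal is correct and follows essentially the same route as the paper: you restrict to the profiles $(2,1)$ and $(1,2)$, force $g_1=g_2=2$ and $m_s=2$ by equality in the excess identity for the $(2,1)$ fibre, transfer the count to $(1,2)$ by conjugation, and rule out self-conjugate maximizers because there is no diagonal profile. Your explicit appeal to Theorem~\ref{thm:all-profiles-realized} for the realization of both profiles fills in a step the paper leaves implicit.
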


\begin{proof}
By Remark~\ref{rem:q345}, the only possible maximizing profiles are $(2,1)$ and $(1,2)$.

Let $\lambda\in M^{(2,1)}_{T_s+4}$. Then the two active gaps consume at least
\[
1+2=3,
\]
and the one active multiplicity consumes at least $1$. Since the total excess is $4$, equality
must hold throughout. Therefore
\[
g_1=g_2=2,\qquad m_s=2,
\]
with all other compressed data in the staircase state. Hence
\[
|M^{(2,1)}_{T_s+4}|=1.
\]

By Corollary~\ref{cor:conjugation-fibres},
\[
|M^{(1,2)}_{T_s+4}|=1.
\]
Since there is no diagonal fibre, Corollary~\ref{cor:selfconj-diagonal} gives
\[
m^{\mathrm{sc}}_{T_s+4}=0.
\]
\end{proof}

\begin{proposition}[Exact fibre classification for $q=5$]
For every $s\ge 5$, one has
\[
\Pi_{T_s+5}=\{(2,1),(1,2)\},
\]
and
\[
|M^{(2,1)}_{T_s+5}|=|M^{(1,2)}_{T_s+5}|=4.
\]
Hence
\[
|M_{T_s+5}|=8,
\qquad
m^{\mathrm{sc}}_{T_s+5}=0.
\]
\end{proposition}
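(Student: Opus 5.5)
We follow the pattern of the $q=3,4$ classifications. By Remark~\ref{rem:q345} (that is, Proposition~\ref{prop:profile-constraint} with $\rho(5)=3$ and $T_a+T_b\le 5$), the only admissible profiles are $(2,1)$ and $(1,2)$. Theorem~\ref{thm:all-profiles-realized} shows that both occur, so $\Pi_{T_s+5}=\{(2,1),(1,2)\}$. Corollary~\ref{cor:conjugation-fibres} gives $|M^{(1,2)}_{T_s+5}|=|M^{(2,1)}_{T_s+5}|$. Since there is no diagonal fibre, Corollary~\ref{cor:selfconj-diagonal} gives $m^{\mathrm{sc}}_{T_s+5}=0$. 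It therefore remains only to show $|M^{(2,1)}_{T_s+5}|=4$.

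Let $\lambda\in M^{(2,1)}_{T_s+5}$, with active gaps at indices $i<i'$ and active multiplicity at index $j$. By Lemma~\ref{lem:excess-identity},
\[
5=i(g_i-1)+i'(g_{i'}-1)+(m_j-1)L_j .
\]
The gap part is at least $3$ and the multiplicity part is at least $1$, so the slack is exactly $1$. We split into two cases according to where the extra unit goes.

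\textbf{Gap part $=4$, multiplicity part $=1$.} The multiplicity part equals $1$ only if $(m_j-1)L_j=1$, which forces $L_j=1$ and $m_j=2$. This means $j=s$, $L_s=1$, $g_s=1$. The gap part equals $4$ in two ways: $(i,i')=(1,3)$ with both gaps equal to $2$, or $(i,i')=(1,2)$ with $g_1=3$, $g_2=2$. The pair $(2,\cdot)$ is impossible, since $2+i'\ge 5$.

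\textbf{Gap part $=3$, multiplicity part $=2$.} Here $g_1=g_2=2$, and all other gaps equal $1$. The multiplicity part equals $2$ in two ways: $L_j=1$ with $m_j=3$, or $L_j=2$ with $m_j=2$.

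This yields four candidate partitions. For each one we must check that the prescribed data are consistent: the gaps fix $L_j$, the active indices must be distinct from the inactive ones, and we need $s\ge 5$. Consistency matters mainly in the second case. The part size $2$ must be $L_{s-1}$, which requires that the modified gaps do not reach the tail; indeed $g_{s-1}=g_s=1$ because $s-1\ge 3>2$. Thus $L_{s-1}=2$ and $L_s=1$, and the hypothesis $s\ge 5$ (indeed $s\ge 4$ suffices here) guarantees this. We must also confirm that in case one the index $3\le s-1$, so $g_3$ is not the last gap $g_s=L_s$. This needs $s\ge 4$, and otherwise the gap would clash with $L_s=1$.

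The four partitions are pairwise distinct because their compressed data differ. Hence $|M^{(2,1)}|=4$, and so $|M_{T_s+5}|=8$. The main obstacle is making the case enumeration exhaustive. In particular, we must rule out a multiplicity part equal to $1$ at any index other than $s$, and we must handle carefully the boundary interaction between the last gap $g_s=L_s$ and the multiplicity bonus at the smallest part.
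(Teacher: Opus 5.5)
Your proof is correct and follows essentially the same route as the paper: you split the excess as $5=G+M$ with $(G,M)\in\{(4,1),(3,2)\}$, enumerate the two realizations in each case to obtain the four elements of $M^{(2,1)}_{T_s+5}$, and then use conjugation symmetry together with the absence of a diagonal fibre. You are slightly more careful than the paper's write-up in three places: you cite Theorem~\ref{thm:all-profiles-realized} for the realization of both profiles, you note that $L_j=1$ forces $j=s$, and you check that $s\ge 4$ keeps the active gaps away from $g_s$. None of this changes the argument.
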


\begin{proof}
Again Remark~\ref{rem:q345} gives the profile set.

Let $\lambda\in M^{(2,1)}_{T_s+5}$. Then the total excess $5$ splits as
\[
5=G+M,
\]
where $G$ is the total gap contribution and $M$ is the multiplicity contribution. Since two
active gaps consume at least $1+2=3$, and one active multiplicity consumes at least $1$, the
only possibilities are
\[
(G,M)=(3,2)
\qquad\text{or}\qquad
(4,1).
\]

If $(G,M)=(3,2)$, then
\[
g_1=g_2=2.
\]
The multiplicity contribution $2$ is realized either by
\[
m_{s-1}=2
\qquad\text{or}\qquad
m_s=3.
\]
This yields two maximizers.

If $(G,M)=(4,1)$, then
\[
m_s=2.
\]
The two active gaps must contribute $4$. Since their indices are distinct, the only possibilities
are
\[
1\cdot 2+2\cdot 1=4,
\qquad
1\cdot 1+3\cdot 1=4.
\]
Thus one obtains either
\[
g_1=3,\ g_2=2,
\qquad\text{or}\qquad
g_1=2,\ g_3=2.
\]
This yields two further maximizers.

Hence
\[
|M^{(2,1)}_{T_s+5}|=4.
\]
By Corollary~\ref{cor:conjugation-fibres},
\[
|M^{(1,2)}_{T_s+5}|=4.
\]
Again there is no diagonal fibre, so
\[
m^{\mathrm{sc}}_{T_s+5}=0.
\]
\end{proof}

\begin{proposition}[Exact fibre classification for $q=6$]
For every $s\ge 6$, one has
\[
\Pi_{T_s+6}=\{(2,2)\},
\]
and
\[
|M^{(2,2)}_{T_s+6}|=1.
\]
Hence
\[
|M_{T_s+6}|=1,
\qquad
m^{\mathrm{sc}}_{T_s+6}=1.
\]
\end{proposition}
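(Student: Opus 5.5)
First I determine the profile set. By Proposition~\ref{prop:rho-explicit}, $\rho(6)=\lfloor\sqrt{25}\rfloor-1=4$. Among pairs with $a+b=4$, the quantity $T_a+T_b$ takes the values $10,7,6,7,10$ at $(4,0),(3,1),(2,2),(1,3),(0,4)$. Only $(2,2)$ satisfies $T_a+T_b\le 6$. Theorem~\ref{thm:all-profiles-realized} then gives $\Pi_{T_s+6}=\{(2,2)\}$; the hypothesis $s\ge 6$ is just the standing condition $q\le s$.

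Next I show the fibre is a single point, by the same equality-throughout argument used for $q=2$ and $q=4$. Take $\lambda\in M^{(2,2)}_{T_s+6}$. By Theorem~\ref{thm:support-maximal}, $\supp(\lambda)=s$, so Lemma~\ref{lem:excess-identity} gives
\[
6=\sum_{i} i(g_i-1)+\sum_j (m_j-1)L_j .
\]
As in the proof of Lemma~\ref{lem:separate-triangular-bounds}, the two active gaps at distinct indices $i_1<i_2$ contribute at least $i_1+i_2\ge 3$. The two active multiplicities sit at distinct part sizes $L_{j_1}>L_{j_2}\ge1$ and contribute at least $L_{j_1}+L_{j_2}\ge 3$. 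Since the total is $6$, both lower bounds must be attained with equality. From the gap side this forces $\{i_1,i_2\}=\{1,2\}$ with $g_1=g_2=2$, and every other gap equal to $1$. From the multiplicity side it forces $m_{j}=2$ on the two parts of sizes $2$ and $1$, with all other multiplicities equal to $1$.

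The step needing care is checking that "sizes $2$ and $1$" really means the indices $s-1$ and $s$. Since $g_3=\dots=g_s=1$, the tail satisfies $L_i=s-i+1$ for $i\ge3$. When $s\ge 6$, the indices $s-1,s$ lie in this tail, so the parts of sizes $1$ and $2$ are exactly $L_s$ and $L_{s-1}$. Hence $\lambda$ is forced to be $\lambda^{\mathrm{can}}_{s,6}(2,2)$, which lies in the fibre by Proposition~\ref{prop:canonical-representative} (here $\ell=0$). Explicitly,
\[
\lambda=(s+2,s,s-2,s-3,\dots,3,2^2,1^2).
\]
This gives $|M^{(2,2)}_{T_s+6}|=1$ and $|M_{T_s+6}|=1$.

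Finally, for self-conjugacy, Corollary~\ref{cor:conjugation-fibres} says conjugation maps $M^{(2,2)}_{T_s+6}$ bijectively to itself. Since this fibre is a singleton, its unique element equals its own conjugate, so $m^{\mathrm{sc}}_{T_s+6}=1$. As a sanity check, Proposition~\ref{prop:conjugation-swaps-profile} confirms this directly: the conjugate's multiplicities are $g_s,\dots,g_1=(1,\dots,1,2,2)$ and its gaps are $m_s,\dots,m_1=(2,2,1,\dots,1)$, which reproduce the same data.
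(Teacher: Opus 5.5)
Your proof is correct and follows the paper's own argument. You use $\rho(6)=4$ together with $T_2+T_2=6$ to isolate the profile $(2,2)$, then the equality-throughout bound (gap contribution $\ge 3$, multiplicity contribution $\ge 3$, total excess $6$) to force $g_1=g_2=2$ and $m_{s-1}=m_s=2$, and finally the fact that a singleton diagonal fibre must be fixed by conjugation; your explicit elimination of $(3,1)$, $(1,3)$, $(4,0)$, $(0,4)$ and the index identification simply spell out details the paper leaves implicit.
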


\begin{proof}
Since
\[
\rho(6)=4,\qquad T_2+T_2=6,
\]
the only realized maximizing profile is $(2,2)$.

Let $\lambda\in M^{(2,2)}_{T_s+6}$. Then the two active gaps consume at least
\[
1+2=3,
\]
and the two active multiplicities consume at least
\[
2+1=3.
\]
Since the total excess is $6$, equality must hold throughout. Therefore
\[
g_1=g_2=2,\qquad m_{s-1}=m_s=2,
\]
with all other compressed data in the staircase state. Hence
\[
|M^{(2,2)}_{T_s+6}|=1.
\]

Because this is a singleton diagonal fibre, Corollary~\ref{cor:conjugation-fibres} implies
that its unique element is fixed by conjugation. Thus
\[
m^{\mathrm{sc}}_{T_s+6}=1.
\]
\end{proof}

\begin{proposition}[Exact fibre classification for $q=7$]
For every $s\ge 7$, one has
\[
\Pi_{T_s+7}=\{(3,1),(2,2),(1,3)\},
\]
and
\[
|M^{(3,1)}_{T_s+7}|=1,\qquad
|M^{(2,2)}_{T_s+7}|=4,\qquad
|M^{(1,3)}_{T_s+7}|=1.
\]
Hence
\[
|M_{T_s+7}|=6,
\qquad
m^{\mathrm{sc}}_{T_s+7}=0.
\]
\end{proposition}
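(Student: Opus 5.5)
The profile set comes from Theorem~\ref{thm:all-profiles-realized}. We have $\rho(7)=\lfloor\sqrt{29}\rfloor-1=4$, and the pairs $(a,b)$ with $a+b=4$ and $T_a+T_b\le 7$ are exactly $(3,1)$, $(2,2)$, $(1,3)$. Indeed, $T_3+T_1=7$ and $T_2+T_2=6$, while $(4,0)$ gives $T_4=10>7$. So $\Pi_{T_s+7}=\{(3,1),(2,2),(1,3)\}$. As in the earlier windows, I would then count each fibre using the excess identity of Lemma~\ref{lem:excess-identity}:
\[
7=\sum_i i(g_i-1)+\sum_i (m_i-1)L_i .
\]
Every active gap at index $i$ costs at least $i$. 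Every active multiplicity at part size $L_j$ costs at least $L_j$. I would also use repeatedly that, when only a little excess is available, the tail parts remain at their staircase values $L_{s-k+1}=k$. The hypothesis $s\ge 7$ ensures the gap positions (small indices) and the multiplicity positions (small part sizes at the bottom) do not interact.

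\emph{Fibre $(3,1)$.} Three distinct active gap indices cost at least $1+2+3=6$, and one active multiplicity costs at least $1$. The total $7$ forces equality throughout: $g_1=g_2=g_3=2$ and $m_s=2$. So the fibre has one element, and by Corollary~\ref{cor:conjugation-fibres} the fibre $(1,3)$ also has one element.

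\emph{Fibre $(2,2)$.} This is the main step. Write $7=G+M$. The two active gaps give $G\ge 3$, and the two active multiplicities give $M\ge 1+2=3$, since they sit at distinct part sizes. So $(G,M)\in\{(3,4),(4,3)\}$.
\begin{itemize}
\item If $(G,M)=(4,3)$, then $M=3$ forces $m_{s-1}=m_s=2$ with the tail unchanged. The gap contribution $4$ with two distinct active indices is either $g_1=3,g_2=2$ or $g_1=2,g_3=2$, as in the case $q=5$. This gives two maximizers.
\item If $(G,M)=(3,4)$, then $g_1=g_2=2$, and the two multiplicity bonuses must contribute $4$. The options are $m_s=3,m_{s-1}=2$ (cost $2+2$) or $m_s=2,m_{s-2}=2$ (cost $1+3$). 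This gives two more maximizers.
\end{itemize}
The delicate point is to confirm that in each subcase the tail part sizes $1,2,3$ are unchanged. This holds because the active gaps sit at indices $\le 3$, far from the bottom when $s\ge 7$. I would also check that no other split is possible, for example a multiplicity of $4$ or a gap of $4$ at index $1$, since each of these already exceeds the budget. So $|M^{(2,2)}_{T_s+7}|=4$, and the total is $|M_{T_s+7}|=1+4+1=6$.

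For the self-conjugate count, Corollary~\ref{cor:selfconj-diagonal} says a self-conjugate maximizer must lie in the fibre $(2,2)$. I would compute conjugates with Proposition~\ref{prop:conjugation-swaps-profile}: conjugation reverses the gap sequence into multiplicities and the multiplicity sequence into gaps. The subcase $(g_1,g_2)=(3,2)$, $m_{s-1}=m_s=2$ maps to gaps $(g_1,g_2)=(2,2)$ with multiplicities $m_s=3$, $m_{s-1}=2$. Likewise $g_1=g_3=2$, $m_{s-1}=m_s=2$ maps to $g_1=g_2=2$, $m_s=m_{s-2}=2$. So the four elements form two conjugate pairs with no fixed point, which gives $m^{\mathrm{sc}}_{T_s+7}=0$. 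The main obstacle is the exhaustive bookkeeping in the $(2,2)$ fibre, namely making sure the case split on $(G,M)$ and on how each budget is distributed misses nothing.
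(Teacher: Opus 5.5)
Your proposal is correct and follows essentially the same route as the paper. The paper takes the profile set from Theorem~\ref{thm:all-profiles-realized}, forces equality in the $(3,1)$ fibre and transfers the count to $(1,3)$ by conjugation, splits the $(2,2)$ fibre as $(G,M)\in\{(3,4),(4,3)\}$ with two configurations each, and finds two conjugate pairs, so no self-conjugate maximizer. Your explicit conjugate pairings via Proposition~\ref{prop:conjugation-swaps-profile}, and your correct value $T_3+T_1=7$ (the paper's proof has the typo $T_3+T_1=6$), fill in details the paper only asserts.
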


\begin{proof}
Since
\[
\rho(7)=4,\qquad T_3+T_1=6,\qquad T_2+T_2=6,\qquad T_4=10>7,
\]
Theorem~\ref{thm:all-profiles-realized} gives
\[
\Pi_{T_s+7}=\{(3,1),(2,2),(1,3)\}.
\]

For the profile $(3,1)$, the three active gaps consume at least
\[
1+2+3=6,
\]
and the one active multiplicity consumes at least $1$. Since the total excess is $7$, equality
must hold. Therefore
\[
g_1=g_2=g_3=2,\qquad m_s=2,
\]
and hence
\[
|M^{(3,1)}_{T_s+7}|=1.
\]
By Corollary~\ref{cor:conjugation-fibres},
\[
|M^{(1,3)}_{T_s+7}|=1.
\]

Now let $\lambda\in M^{(2,2)}_{T_s+7}$. Then
\[
7=G+M,
\]
with two active gaps and two active multiplicities. Since the minimal contributions are
\[
G\ge 3,\qquad M\ge 3,
\]
the only possibilities are
\[
(G,M)=(3,4)
\qquad\text{or}\qquad
(4,3).
\]

If $(G,M)=(3,4)$, then
\[
g_1=g_2=2.
\]
The multiplicity contribution $4$ with two active multiplicities has exactly two possibilities:
\[
4=1+3
\qquad\text{or}\qquad
4=2+2.
\]
These give
\[
m_{s-2}=2,\ m_s=2,
\qquad\text{or}\qquad
m_{s-1}=2,\ m_s=3.
\]
Hence this case yields two maximizers.

If $(G,M)=(4,3)$, then
\[
m_{s-1}=m_s=2.
\]
The gap contribution $4$ with two active gaps has exactly two possibilities:
\[
4=1+3
\qquad\text{or}\qquad
4=2+2,
\]
giving
\[
g_1=2,\ g_3=2,
\qquad\text{or}\qquad
g_1=3,\ g_2=2.
\]
Hence this case yields two further maximizers.

Therefore
\[
|M^{(2,2)}_{T_s+7}|=4,
\qquad
|M_{T_s+7}|=1+4+1=6.
\]

The four maximizers in the diagonal fibre split into two conjugate pairs, so none of them is
self-conjugate. Thus
\[
m^{\mathrm{sc}}_{T_s+7}=0.
\]
\end{proof}

\begin{observation}[Computed fibre data for $q=8$]\label{obs:q8-computed}
For every tested value of $s$ with $T_s+8\le 63$, the exact profile set is
\[
\Pi_{T_s+8}=\{(3,1),(2,2),(1,3)\},
\]
and the computed fibre cardinalities are
\[
|M^{(3,1)}_{T_s+8}|=4,\qquad
|M^{(2,2)}_{T_s+8}|=14,\qquad
|M^{(1,3)}_{T_s+8}|=4.
\]
Hence, throughout the tested range,
\[
|M_{T_s+8}|=22,
\qquad
m^{\mathrm{sc}}_{T_s+8}=2.
\]
\end{observation}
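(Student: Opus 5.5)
The plan is to turn the observation into a theorem valid for all $s\ge 8$ (the constraint $q\le s$ already forces $s\ge 8$), using the excess identity in the same way as in the classifications for $q=3,\dots,7$. For the profile set: $\rho(8)=\lfloor\sqrt{33}\rfloor-1=4$. The pairs with $a+b=4$ satisfy $T_3+T_1=7\le 8$, $T_2+T_2=6\le 8$ and $T_4+T_0=10>8$. Theorem~\ref{thm:all-profiles-realized} then gives $\Pi_{T_s+8}=\{(3,1),(2,2),(1,3)\}$.

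For each fibre I would write $8=G+M$. Here $G=\sum_i i(g_i-1)$ is the gap contribution and $M=\sum_j (m_j-1)L_j$ is the multiplicity contribution. For $r\ge1$ let $N(r,G)$ be the number of ways to choose $r$ distinct indices $i$ with $g_i\ge2$ so that the weighted sum equals $G$.

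\textbf{Localization step.} This is the step I expect to be the main obstacle. The part sizes $L_j$ in $M$ depend on the gaps. I must check that for $s\ge 8$ the two kinds of data do not interact.

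\begin{itemize}
\item Every active gap index satisfies $i\le G\le 5$.
\item Every active multiplicity sits at a part of size $L_j\le M\le 5$.
\item A part of size at most $5$ must lie at an index $j$ below all active gaps. Otherwise $L_j\ge s-j+2$, and the index bound above then forces $L_j$ to exceed $5$.
\item Hence these $L_j$ are the staircase values $s-j+1$. They are small integers, and the count of multiplicity patterns equals $N(r,M)$.
\end{itemize}

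The same bijection can also be read off from conjugation, via Proposition~\ref{prop:conjugation-swaps-profile}. I would need to verify the separation claim carefully for the boundary case $s=8$.

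With this factorization, $|M^{(a,b)}|=\sum_{G+M=8}N(a,G)\,N(b,M)$. The needed values are computed by hand as follows.

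\begin{itemize}
\item For $r=1$: $N(1,1)=1$ and $N(1,2)=2$, from $g_1=3$ or $g_2=2$.
\item For $r=3$: $N(3,6)=1$. Also $N(3,7)=2$, from the index set $\{1,2,4\}$ with all gaps $2$, or $\{1,2,3\}$ with $g_1=3$.
\item For $r=2$: $N(2,3)=1$ and $N(2,4)=2$. Also $N(2,5)=5$, from the index sets $\{1,4\}$ and $\{2,3\}$, from $\{1,2\}$ with $g_1=4$ or with $g_2=3$, and from $\{1,3\}$ with $g_1=3$.
\end{itemize}

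This gives $|M^{(3,1)}|=N(3,6)N(1,2)+N(3,7)N(1,1)=2+2=4$. Corollary~\ref{cor:conjugation-fibres} gives $|M^{(1,3)}|=4$. Finally $|M^{(2,2)}|=N(2,3)N(2,5)+N(2,4)^2+N(2,5)N(2,3)=5+4+5=14$, so $|M_{T_s+8}|=22$.

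For self-conjugate maximizers, Corollary~\ref{cor:selfconj-diagonal} reduces the count to the $(2,2)$ fibre. Conjugation exchanges the gap data with the reversed multiplicity data. It therefore swaps the $(G,M)=(3,5)$ and $(5,3)$ blocks, so no fixed points occur there. On the $(4,4)$ block it acts on the four pairs (gap pattern, multiplicity pattern) by transposing the two coordinates, and fixes exactly the two diagonal pairs. Hence $m^{\mathrm{sc}}_{T_s+8}=2$.
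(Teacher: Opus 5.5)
Your argument is correct, and it takes a genuinely different route from the paper. The paper proves only the profile set, from Theorem~\ref{thm:all-profiles-realized}, exactly as you do. The fibre sizes $4,14,4$, the total $22$ and $m^{\mathrm{sc}}_{T_s+8}=2$ come from complete enumeration, which covers only $s\in\{8,9,10\}$.

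You instead show that the gap data and the multiplicity data of a maximizer decouple. Each fibre is then counted by pairs consisting of a gap pattern of weight $G$ and a multiplicity pattern of weight $8-G$, and conjugation swaps the two members of a pair. This turns the observation into a theorem for every $s\ge 8$ and explains the numbers. The count is $14=5+4+5$, and the two self-conjugate maximizers are the diagonal pairs built from $(3,1)$ and from $(2,1,1)$ in the $(4,4)$ block. The enumeration is only a finite check, with no explanation of either.

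Nothing in your argument is specific to $q=8$. For any $0\le q\le s$ it gives $|M^{(a,b)}_{T_s+q}|=\sum_{G+M=q}N(a,G)\,N(b,M)$, and hence $s$-independent fibre and self-conjugate counts. This answers Problem~\ref{prob:window-stability} affirmatively, with the counts constant for all $s\ge q$. It also makes explicit the staircase-tail fact that the paper's hand classifications for $q\le 7$ use tacitly.

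The localization step needs one repair. Your bullet ``$i\le G\le 5$'' is false in the $(3,1)$ fibre, where $G\in\{6,7\}$. Even in the $(2,2)$ fibre, the separate bounds give only $L_j\ge 5$ when $s=8$, not $L_j>5$. Use $G+M=8\le s$ jointly instead. Suppose an active gap sits at an index $k\ge j$, where $j$ carries an active multiplicity. Then $j\le k\le G$, so $L_j\ge s-j+2\ge s-G+2\ge M+2>M\ge L_j$, a contradiction.

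The same inequality also gives the converse direction that your factorization needs. In the form $k\le G\le s-M<s-L+1$ for every tail part size $L\le M$, it shows that every pair of patterns with $G+M=8$ yields a support-$s$ partition. In that partition the active multiplicities sit at the staircase sizes, so it is a maximizer.
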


\begin{proof}
The profile set is exact and follows from Theorem~\ref{thm:all-profiles-realized}, since
\[
\rho(8)=4,\qquad T_3+T_1=7,\qquad T_2+T_2=6,\qquad T_4=10>8.
\]
The fibre cardinalities and self-conjugate count were obtained by complete enumeration in the
computed range $T_s+8\le 63$ described in Section~\ref{subsec:computational-method}.
\end{proof}

\begin{corollary}[Exact small-window data]
For $q=0,1,\dots,7$, the fibre cardinalities, total maximizer counts, and self-conjugate
counts in the window $T_s+q$ are independent of $s$. More precisely,
\[
|M_{T_s+q}|=1,2,1,6,2,8,1,6
\qquad (q=0,\dots,7),
\]
and
\[
m^{\mathrm{sc}}_{T_s+q}=1,0,1,0,0,0,1,0
\qquad (q=0,\dots,7).
\]
\end{corollary}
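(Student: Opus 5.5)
The corollary simply collects the preceding window results, so the plan is to assemble them case by case. A remark on the range of $s$ is needed: each window result is stated for $s\ge q$. This is exactly the standing hypothesis $0\le q\le s$ in the decomposition $n=T_s+q$, so "independent of $s$" is to be read for all admissible $s$.

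The cases are as follows.
\begin{itemize}
\item $q=0$: Proposition~\ref{prop:triangular} gives the unique maximizer $\delta_s$. Since $\delta_s$ is self-conjugate, we get $|M|=1$ and $m^{\mathrm{sc}}=1$.
\item $q=1$: Proposition~\ref{prop:q1} gives the two maximizers $(s+1,s-1,\dots,1)$ and $(s,\dots,2,1^2)$. These lie in the off-diagonal fibres $(1,0)$ and $(0,1)$, so Corollary~\ref{cor:selfconj-diagonal} gives $m^{\mathrm{sc}}=0$. Alternatively, Proposition~\ref{prop:conjugation-swaps-profile} shows directly that they are conjugate to each other.
\item $q=2$: Proposition~\ref{prop:q2} gives a single maximizer, so $|M|=1$. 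It forms a singleton diagonal fibre $(1,1)$. By Corollary~\ref{cor:conjugation-fibres}, conjugation maps this fibre to itself, so the unique element is self-conjugate and $m^{\mathrm{sc}}=1$.
\item $q=3,\dots,7$: read off the totals $6,2,8,1,6$ and the self-conjugate counts $0,0,0,1,0$ from the five exact fibre classification propositions just proved. In each of them the fibre cardinalities are explicit constants not depending on $s$.
\end{itemize}

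The main point to check is the claim that the fibre sizes themselves are $s$-independent, and not merely the totals. For $q\le 7$ every enumeration above fixes the positions of the active coordinates relative to the ends of the staircase. Active gap indices $i$ are bounded by $q$, and active multiplicities sit at $L_j\le q$, that is, at indices $j\ge s-q+1$. The enumerations also assume that these two blocks do not interfere, and this is where the hypothesis $s\ge q$ is used.

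I would justify the non-interference as follows. In every enumerated configuration the excess spent is exactly $q$, and the gap modifications occupy indices at most $3$. The multiplicity modifications occupy $L_j\le 3$ within the untouched staircase tail, which requires $s$ large enough that those tail part sizes are still $1,2,3$. This holds once $s\ge q$ in each listed case, since the active supports are too small to overlap. Hence the same finite list of configurations arises for every admissible $s$, which gives the $s$-independence and completes the collation.
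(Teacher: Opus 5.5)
Your proposal is correct and is essentially the paper's argument. Like the paper, you collate Proposition~\ref{prop:triangular}, Propositions~\ref{prop:q1}--\ref{prop:q2} and the five window propositions. Your explicit derivation of $m^{\mathrm{sc}}$ for $q=0,1,2$ via Corollaries~\ref{cor:selfconj-diagonal} and~\ref{cor:conjugation-fibres} fills in a step the paper leaves implicit.

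Your final non-interference paragraph is not needed for the corollary, since the window propositions are already stated for all $s\ge q$. If you keep it, make it rigorous rather than heuristic: an active gap at $i$ and an active multiplicity at $j$ force $i+L_j\le q\le s$ and $L_j\ge s-j+1$, hence $i<j$.
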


\begin{proof}
The values for $q=0,1,2$ are given by Proposition~\ref{prop:triangular} and
Propositions~\ref{prop:q1}--\ref{prop:q2}. The values for $q=3,\dots,7$ follow from the
propositions above.
\end{proof}

\subsection{Localization of active coordinates}\label{subsec:fixed-q-stability}

\begin{lemma}[Localization of active coordinates]\label{lem:localization}
Let
\[
n=T_s+q,
\qquad
0\le q\le s,
\]
and let $\lambda\in M_n$ be written in support-$s$ compressed form. Write
\[
g_i=1+x_i,
\qquad
m_i=1+y_i,
\qquad
x_i,y_i\ge 0.
\]
Then:
\begin{enumerate}[leftmargin=2em]
\item if $x_i>0$, then $i\le q$;
\item if $y_i>0$, then the corresponding part size satisfies $L_i\le q$, and in particular
\[
i\ge s-q+1.
\]
\end{enumerate}
Thus every maximizer is obtained from the staircase partition by modifications localized near
the top gap region and the bottom multiplicity region.
\end{lemma}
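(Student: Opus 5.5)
The plan is to read both claims off the exact excess identity. By Theorem~\ref{thm:support-maximal}, any $\lambda\in M_n$ has $\supp(\lambda)=s$ when $q\ge1$; for $q=0$, Proposition~\ref{prop:triangular} gives $\lambda=\delta_s$, where all $x_i=y_i=0$ and there is nothing to prove. So we may assume $q\ge1$ and work with the support-$s$ compressed form. Lemma~\ref{lem:excess-identity} with $r=s$ then reads
\[
q=\sum_{i=1}^s i\,x_i+\sum_{i=1}^s y_i L_i,
\]
and every summand on the right is a nonnegative integer.

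For part (1), if $x_i>0$ then the single term $i\,x_i$ is at least $i$ and at most the whole sum $q$. Hence $i\le i\,x_i\le q$.

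For part (2), if $y_i>0$ then similarly $L_i\le y_iL_i\le q$. To convert this into an index bound, note that $L_i=\sum_{j=i}^s g_j\ge s-i+1$, since every gap is at least $1$. Therefore $s-i+1\le q$, i.e. $i\ge s-q+1$.

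The concluding sentence is an interpretation of (1) and (2): nonzero $x_i$ occur only among the first $q$ gap indices, and nonzero $y_i$ only among the last $q$ multiplicity indices. Nothing here is a genuine obstacle. The only point needing care is the reduction to support exactly $s$, which is used to identify $n-T_r$ with $q$; that is why the case $q=0$ is handled separately via Proposition~\ref{prop:triangular}. Maximality beyond support size is not needed. The bounds in fact hold for every support-$s$ partition of $n$, so the argument does not invoke the degree at all.
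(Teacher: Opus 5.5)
Your proof is correct and matches the paper's argument: both read the two bounds directly off the excess identity $q=\sum_i i\,x_i+\sum_i y_iL_i$ with $r=s$, and both use $L_i\ge s-i+1$ to get the index bound. The paper takes support size $s$ as given by the phrase ``written in support-$s$ compressed form''; you justify it explicitly, via Theorem~\ref{thm:support-maximal} for $q\ge1$ and Proposition~\ref{prop:triangular} for $q=0$, which is a small but welcome addition.
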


\begin{proof}
Since $\lambda$ has support size $s$, Lemma~\ref{lem:excess-identity} gives
\[
q=\sum_{i=1}^s i x_i+\sum_{i=1}^s y_i L_i.
\]
If $x_i>0$, then necessarily $i\le q$. If $y_i>0$, then $L_i\le q$. Since
\[
L_i\ge s-i+1,
\]
it follows that
\[
i\ge s-q+1.
\]
\end{proof}

The lemma shows that, for fixed excess $q$, only the first $q$ gap positions and the last $q$
multiplicity positions can participate in a maximizing configuration. This strongly suggests
that fixed-$q$ windows should be the natural coordinate system for the unresolved part of the
degree theory.

It also shows that recurrence phenomena appear very early. The windows $q=4$ and $q=5$ have the same
realized profile set
\[
\{(2,1),(1,2)\},
\]
but different fibre cardinalities. Likewise, the windows $q=7$ and $q=8$ share the same profile
set
\[
\{(3,1),(2,2),(1,3)\},
\]
yet have substantially different internal fibre geometry. Thus the profile set alone does not
determine the fibre structure.

\subsection{Open problems}\label{subsec:window-open-problems}

\begin{problem}[Window stability]\label{prob:window-stability}
Fix $q\ge 0$. Do the quantities
\[
|M^{(a,b)}_{T_s+q}|,
\qquad
|M_{T_s+q}|,
\qquad
m^{\mathrm{sc}}_{T_s+q}
\]
eventually stabilize as functions of $s$?
\end{problem}

\begin{problem}[Window classification by excess]\label{prob:window-classification}
Classify the near-triangular windows $T_s+q$ according to their fibre geometry. In particular,
determine which values of $q$ give:
\begin{enumerate}[leftmargin=2em]
\item rigid singleton windows;
\item symmetric two-fibre windows;
\item three-profile windows with a dominant diagonal fibre;
\item windows with nontrivial self-conjugate substructure.
\end{enumerate}
\end{problem}

Taken together, Lemma~\ref{lem:localization} and the exact small-window classifications above
suggest that fixed-$q$ windows provide a natural coordinate system for the unresolved part of
the degree theory. At the exact level, the maximal degree and the realized profile set are already
explicit. At the fibre level, however, the excess $q$ appears to govern a nontrivial hierarchy
of recurrent geometric regimes.

\section{Conclusion and open problems}

In this paper, we developed a three-level degree theory for the partition graph $G_n$. The starting point was the exact local degree formula in terms of support size, gap pattern, and multiplicity pattern. Rewritten in a support-plus-bonus form, this local formula yields strong global consequences.

The first outcome is an exact extremal theory. Writing
\[
n=T_s+q,
\qquad
T_s=\frac{s(s+1)}2,
\qquad
0\le q\le s,
\]
we proved that every degree-maximizing partition lies in the support-maximal stratum and that
\[
\Delta_n
=
s(s-1)+\left\lfloor\sqrt{4q+1}\right\rfloor-1.
\]

The second outcome is an exact profile theory. For support-maximal partitions, the degree-maximizing condition can be expressed in terms of the pair
\[
(A(\lambda),B(\lambda)),
\]
and the realized maximizing profile set is
\[
\Pi_n
=
\mathcal P(q)
=
\{(a,b)\in\mathbb Z_{\ge0}^2 : a+b=\rho(q),\ T_a+T_b\le q\}.
\]

The third outcome is a fibre-level theory. For each realized profile $(a,b)$, we introduced the fibre
\[
M_n^{(a,b)}
=
\{\lambda\vdash n : \deg(\lambda)=\Delta_n,\ (A(\lambda),B(\lambda))=(a,b)\},
\]
and established several structural facts: every realized fibre is nonempty, every mixed fibre contains an explicit slack family, and conjugation induces a bijection
\[
M_n^{(a,b)}\longrightarrow M_n^{(b,a)}.
\]
We then determined exactly the near-triangular windows up to $q=7$, obtaining complete fibre-level classifications for the initial excesses and showing that the unresolved part of the degree theory begins only beyond this exact small-window regime.

We conclude with a list of open problems.

\begin{problem}[Total multiplicity of maximizers]
Determine or estimate the sequence
\[
|M_n|.
\]
\end{problem}

\begin{problem}[Fibre cardinalities beyond the initial exact regime]
For $n=T_s+q$ and $(a,b)\in\Pi_n$, determine the exact cardinality of the fibre
\[
|M_n^{(a,b)}|
\]
in general, and in particular beyond the initial exactly classified near-triangular windows.
\end{problem}

\begin{problem}[Diagonal fibres and self-conjugate maximizers]
Characterize the diagonal fibres
\[
M_n^{(a,a)}
\]
in general and determine when they contain self-conjugate maximizers, beyond the initial
small-excess cases settled in Section~\ref{sec:computations}.
\end{problem}

\begin{problem}[Geometric classification inside fibres]
Classify the geometric types of maximizers inside a fixed fibre $M_n^{(a,b)}$.
\end{problem}

\begin{problem}[Near-triangular windows beyond the initial exact range]
For fixed excess $q$ beyond the initial exactly classified range, study the behaviour of
\[
|M_{T_s+q}|,
\qquad
|M_{T_s+q}^{(a,b)}|,
\qquad
m_{T_s+q}^{\mathrm{sc}}
\]
as functions of $s$.
\end{problem}

\begin{problem}[Degree spectrum and degree layers]
Study the full degree spectrum
\[
\mathrm{Spec}_n=\{\deg(\lambda):\lambda\vdash n\}
\]
and the degree layers
\[
D_d(n)=\{\lambda\vdash n:\deg(\lambda)=d\}.
\]
\end{problem}

\begin{problem}[Relation to other structural invariants]
Clarify the relation between degree and other invariants already present in the partition-graph program, including support size, local simplex dimension, axial position, spinal distance, and self-conjugacy.
\end{problem}

\begin{problem}[Recurrence phenomena]
Formulate and test precise recurrence statements for maximizer multiplicities and fibre types in near-triangular windows.
\end{problem}

\section*{Acknowledgements}

The author acknowledges the use of ChatGPT (OpenAI) for discussion, structural planning, and editorial assistance during the preparation of this manuscript. All mathematical statements, proofs, computations, and final wording were checked and approved by the author, who takes full responsibility for the contents of the paper.

\end{document}